\numberwithin{equation}{section}
\newtheorem{theorem}{Theorem}[section]
\newtheorem{proposition}[theorem]{Proposition}
\newtheorem{lemma}[theorem]{Lemma}
\newtheorem{corollary}[theorem]{Corollary}
\theoremstyle{definition}
\newtheorem{definition}[theorem]{Definition}
\newtheorem{example}[theorem]{Example}
\theoremstyle{remark}
\newtheorem{remark}[theorem]{Remark}
\newcommand{\eps}{\varepsilon}
\newcommand{\R}{{\mathbb R}}
\newcommand{\Id}{\mathrm{Id}}
\newcommand{\dopu}{{:}\allowbreak\ }
\newcommand{\loglike}[1]{\mathop{\rm #1}\nolimits}
\newcommand{\sign}{\loglike{sign}}
\newcommand{\co}{\loglike{co}}
\newcounter{abc}   
\newcounter{iiiii} 
\newenvironment{aequivalenz}
{\setcounter{iiiii}{0}
\begin{list}%
{{\rm (\roman{iiiii})}}
{\usecounter{iiiii}
\parsep=0pt plus 1pt
\topsep=1pt plus 2pt minus 1pt
\itemsep=1pt plus 2pt minus 1pt
\leftmargin=3\baselineskip \labelsep=.6\baselineskip
\labelwidth=2.4\baselineskip
\rightmargin 0pt}%
}
{\end{list}}
\newenvironment{statements}%
{\setcounter{abc}{0}
\begin{list}%
{{\rm (\alph{abc})}}
{\usecounter{abc}
\parsep=0pt plus 1pt
\topsep=1pt plus 2pt minus 1pt
\itemsep=1pt plus 2pt minus 1pt
\leftmargin=3\baselineskip \labelsep=.6\baselineskip
\labelwidth=2.4\baselineskip
\rightmargin 0pt}%
}
{\end{list}}
\newcommand{\bea}{\begin{eqnarray*}}
\newcommand{\eea}{\end{eqnarray*}}
\newcommand{\beq}{\begin{equation}}
\newcommand{\eeq}{\end{equation}}
\newcommand{\begsta}{\begin{statements}}
\def\endsta{\end{statements}}
\newcommand{\begaeq}{\begin{aequivalenz}}
\def\endaeq{\end{aequivalenz}}
\begin{document}

\title[The Daugavet property for
  bilinear maps]{Slice continuity for operators and the Daugavet property for
  bilinear maps} 

\author{Enrique A. S\'anchez P\'erez and Dirk Werner}

\subjclass[2000]{Primary 46B04; secondary  46B25}

\keywords{Daugavet property, slice, bilinear maps}

\thanks{The authors acknowledge with thanks the
support of the Ministerio de  Econom\'{\i}a y Competitividad
(Spain) under the research project MTM2009-14483-c02-02.}

\address{Instituto
Universitario de Matem\'atica Pura y Aplicada,  Universidad
\linebreak
Polit\'ecnica de Valencia, Camino de Vera s/n, 46022 Valencia, Spain.}
\email{easancpe@mat.upv.es}

\address{Department of Mathematics, Freie Universit\"at Berlin,
Arnimallee~6, \qquad {}\linebreak D-14\,195~Berlin, Germany}
\email{werner@math.fu-berlin.de}

\begin{abstract}
We introduce and analyse the notion of slice continuity between
operators on Banach spaces in the setting of the Daugavet
property.
 It is shown that under the slice continuity assumption
the Daugavet equation holds for weakly compact operators. As an
application we define and characterise the Daugavet property for
bilinear maps, and we prove that this allows us to describe some
$p$-convexifications  of the Daugavet equation for operators on
Banach function spaces that have  recently been introduced.
\end{abstract}

\maketitle

\thispagestyle{empty}

\section{Introduction}

A Banach space $X$ is said to satisfy the Daugavet property if 
the so-called Daugavet equation
$$
\| \Id + R \| = 1 + \|R\|
$$
is satisfied for every rank one operator $R\dopu X \to X$. In recent
years, the Daugavet property for Banach spaces has been studied by
several authors, and various applications have been found (see for
instance
\cite{bulpolis2008,ams2000,studia2001,jfa1997,Dirk-IrBull}).

The aim of this paper is to introduce and analyse the notion of
slice continuity between operators on Banach spaces. We will show
that under this assumption one can  easily characterise when the
Daugavet equation holds for a couple of operators $T$ and $R$
between Banach spaces, i.e., when
$$
\| T + R\| = \|T\| + \|R\|.
$$

Recently, some new ideas have been introduced in this direction.
The notion of Daugavet centre has been studied in
\cite{bosen,bosen2,boseka}. According to Definition~1.2 in
\cite{boseka}, a nonzero operator $T$ between (maybe different) Banach
spaces is a Daugavet centre if the above Daugavet equation holds for
every rank one operator~$R$. In this paper we develop a notion that is
in a sense connected to this one but provides a direct tool for
analysing when a particular couple of operators satisfies the
Daugavet equation. Our idea is to relate the set of slices defined
by each of the two operators. Recall that the slice  $S(x',
\varepsilon)$  of the unit ball of a real Banach space $X$ determined by a
norm one element 
$x' \in X'$ and an $ \varepsilon >0$ is the set
$$
S(x', \varepsilon)= \{ x \in B_X \dopu  \langle x,
x' \rangle  \ge  1- \varepsilon  \}.
$$

 Let $Y$ be a Banach space. Let $T\dopu X \to Y$ be an operator. We
will define the set of slices associated to $T$ by
$$
S_T:=\{S(T'(y')/\|T'(y')\|,\varepsilon) \dopu  0< \varepsilon < 1, \ y'
\in Y', \  T'(y') \ne 0 \},
$$
and we will say that an operator $R\dopu X \to Y$ is slice continuous
with respect to $T$
-- we will write $S_R \le S_T$ -- if for every $S \in S_R$ there is a
slice $S_1 \in S_T$ such that $S_1 \subset S$. This notion will
be used for characterising when the Daugavet equation holds by
adapting
 some of the known results on the geometric
description of the Daugavet property to our setting. From the
technical point of view, we use some arguments on the Daugavet
property defined by subspaces of $X'$ that can be found in
\cite{bulpolis2008}. This is done in Section~2. In Section~3 we
develop the framework for using our results in the setting of the
bilinear maps in order to obtain the main results of the paper
regarding applications. Finally in Section~4 we provide examples
and applications, mainly related to a unified general point of
view to understand the $p$-convexification of the Daugavet
equation for Banach function spaces that have recently been
studied in \cite{EnrDir2,EnrDir3}.

Our notation is standard.
 Let $X,Y$ and $Z$ be real Banach spaces. $B_X$ and $S_X$ are the unit
 ball and the unit sphere of $X$, respectively. We write $U_X$ for the
 open unit ball and $X'$ for the dual space. We denote by 
$L(X,Y)$ the space of continuous operators and by $B(X \times Y, Z)$
the space of continuous bilinear maps from $X \times Y$ to $Z$. If
$T$ is an operator, we write $T'$ for its adjoint operator. If $x'
\in X'$ and $y \in Y$, we identify the tensor $x' \otimes y$ with
the operator $x' \otimes y\dopu X \to Y$ mapping $x$ to
$x'(x)y$. Throughout the paper all the 
bilinear maps are assumed to be continuous. In general, we
consider the norm $\|(x,y)\|=\max\{\|x\|_X, \|y\|_Y \}$ for the
direct product $X \times Y$. We will say that a bilinear map $B$
is convex or has convex range (resp.\ is weakly compact) if the norm
closure of $B(B_X,B_Y)$ is convex (resp.\ weakly compact).

Regarding Banach function spaces we also use standard notation. If
$1 \le p \le \infty$ we write $p'$ for the extended real number
satisfying
 $1/p+1/p'=1$. Let us fix some definitions and basic results.
  Let $(\Omega, \Sigma, \mu)$ be a measure space.
A Banach function space $X(\mu)$ over the measure $\mu$ is an
order ideal of $L^0(\mu)$ (the space of $\mu$-a.e.\ equivalence
classes of integrable functions)  that is a Banach
space with a lattice norm $\| \,.\, \|$ such that for every $A \in
\Sigma$ of finite measure, $\chi_A \in X(\mu)$ (see
\cite[Def.~1.b.17]{lint}). We will write $X$ instead of $X(\mu)$ if
the 
measure $\mu$ is clear from the context.
  Of course, Banach function spaces are Banach lattices, so the
following definition makes sense for these spaces. Let $0 < p \le
\infty$. A Banach lattice $E$ is $p$-convex if there is a constant
$K$ such that for each finite sequence $(x_i)_{i=1}^n$ in $E$,
$$
\Big\| \Big( \sum_{i = 1}^n |x_i|^p \Big)^{1/p} \Big\|_E
 \le K \Big(\sum_{i=1}^n \|x_i\|_E^p \Big)^{1/p}.
$$
It is said that it is $p$-concave if there is a constant $k$ such
that for every finite sequence $(x_i)_{i=1}^n$ in $X$,
$$
\Big(\sum_{i=1}^n \|x_i\|_E^p \Big)^{1/p} \le  k \Big\| \Big(
\sum_{i=1}^n |x_i|^p \Big)^{1/p} \Big\|_E.
$$
$M^{(p)}(E)$ and $M_{(p)}(E)$ are the best constants in these
inequalities, respectively.

Let $0 \le p < \infty$. Consider  a Banach function space
$X(\mu)$. Then the set
$$
X(\mu)_{[p]}:=\{h \in L^0(\mu)\dopu  |h|^{1/p} \in X(\mu) \}
$$
is called the $p$-th power of $X(\mu)$, which is a quasi-Banach
function space when endowed with the quasinorm
$\|h\|_{X_{[p]}}:=\| |h|^{1/p} \|^p_X$, $h \in X^p$ (see
\cite{defant}, \cite{maligranda-persson,CDS} or
\cite[Ch.~2]{libro}; the symbols that are used there for this
concept are $X^p$, $X^{1/p}$ and $X_{[p]}$, respectively); if $X$
is $p$-convex and $M^{(p)}(X(\mu))=1$ -- we will say that $X$ is
constant~$1$ $p$-convex --, then $X(\mu)_{[p]}$ is a Banach
function space, since in this case $\|\,.\,\|_{X_{[p]}}$ is a
norm; if $0<p<1$, the $p$-th power of a Banach function space is
always a Banach function space. Every $p$-convex Banach lattice
can be renormed in such a way that the new norm is a lattice norm
with $p$-convexity constant~$1$ (\cite[Prop.~1.d.8]{lint}). Let $f
\in X$. Throughout the paper we use the notation $f^p$ for the sign
preserving $p$-th power of the function $f$, i.e., $f^p:= \sign\{f\}
|f|^p$.

\begin{remark} \label{rembasic}
The following basic facts regarding $p$-th powers of Banach
function spaces will be used several times. Their proofs are
immediate using the results in \cite[Ch.~2]{libro}. Let $X(\mu)$ be a
Banach function space and $0<p<\infty$.
\begin{itemize}
\item[(a)]  For every couple of functions $f \in X$ and $g \in
X_{[p/p']}$ one has $\|fg \|_{X_{[p]}} \allowbreak \le \|f\|_X \|g\|_{X_{[p/p']}}$,
and
$$
\|h\|_{X_{[p]}}= \inf \{ \|f\|_X \|g\|_{X_{[p/p']}}\dopu   fg=h, \ f
\in X, \ g \in X_{[p/p']} \}.
$$

\item[(b)] For every $h \in X_{[p]}$ one has  $h= |h|^{1/p} h^{1/p'}$,
$h^{1/p} \in X$, $h^{1/p'} \in X_{[p/p']}$, and
\bea
\|h\|_{X_{[p]}} &=& \| h^{1/p}\|_X^p = \|h^{1/p} \|_X
\|h^{1/p}\|_X^{p/p'} \\
&=& \|h^{1/p} \|_X \|(h^{1/p'})^{p'/p}\|_X^{p/p'}=\|h^{1/p} \|_X
\|h^{1/p'} \|_{X_{[p/p']}}.
\eea
\end{itemize}
\end{remark}

\section{Slice continuity for couples of linear maps}

Let us start by adapting  some facts that are already essentially
well known (see \cite{ams2000}).

\begin{proposition} \label{geom0}
Let $X$ and $Y$ be Banach spaces. Let $T\dopu X \to Y$ be a norm one
linear map, and consider a norm one linear form $x' \in X'$. Let
$y \in Y \setminus \{0\}$. The following assertions are
equivalent.
\begin{itemize}

\item[(1)] $ \| T+ x' \otimes y\|=1 + \|x' \otimes y\|= 1 + \|y\|.$

\item[(2)]
For every $\varepsilon
>0$ there is an element $x \in S(x',\varepsilon)$ such that
$$
\Bigl\|  T(x) + \frac{y}{\|y\|} \Bigr\| \ge 2- 2 \varepsilon.
$$
\end{itemize}
\end{proposition}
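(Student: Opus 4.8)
The plan is to observe first that, since $\|x'\|=1$, one has $\|x'\otimes y\|=\|y\|$, and the triangle inequality always gives $\|T+x'\otimes y\|\le 1+\|y\|$. Hence (1) is equivalent to the single lower estimate $\|T+x'\otimes y\|\ge 1+\|y\|$, and the whole proposition becomes a comparison between this operator norm, which is the supremum over $B_X$ of $\|(T+x'\otimes y)(x)\|=\|T(x)+\langle x,x'\rangle y\|$, and the normalised geometric quantity appearing in (2). Writing $u:=y/\|y\|$, the elementary tool throughout will be the convexity and $1$-Lipschitz behaviour of the scalar function $t\mapsto\|T(x)+tu\|$, which lets me transfer estimates between its value at $t=1$ (the normalised condition) and its value at $t=\langle x,x'\rangle\|y\|$ (what actually enters the operator).

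For the implication (2)$\Rightarrow$(1) I fix $\varepsilon>0$ and take the element $x\in S(x',\varepsilon)$ provided by (2), so that $\|x\|\le 1$, $\langle x,x'\rangle\in[1-\varepsilon,1]$ and $\|T(x)+u\|\ge 2-2\varepsilon$. Since $\|T\|=1$ forces $\|T(x)\|\le 1$, the inequality $\|T(x)+u\|\ge 2-2\varepsilon$ already pins $\|T(x)\|\ge 1-2\varepsilon$, and convexity of $t\mapsto\|T(x)+tu\|$ then yields that its value at $t=\|y\|$, namely $\|T(x)+y\|$, is at least $1+\|y\|-C\varepsilon$ for a constant $C$ depending only on $\|y\|$. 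Replacing $y$ by $\langle x,x'\rangle y$ costs at most $(1-\langle x,x'\rangle)\|y\|\le\varepsilon\|y\|$, so $\|T+x'\otimes y\|\ge\|(T+x'\otimes y)(x)\|\ge 1+\|y\|-(C+\|y\|)\varepsilon$; letting $\varepsilon\to 0$ gives the required lower bound, hence (1).

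For (1)$\Rightarrow$(2) I run the same estimate backwards. Given $\eta>0$, the fact that the supremum defining $\|T+x'\otimes y\|$ equals $1+\|y\|$ furnishes $x\in B_X$ with $\|T(x)+\langle x,x'\rangle y\|>1+\|y\|-\eta$; after possibly replacing $x$ by $-x$, which changes neither $\|x\|$ nor the norm of the image, I may assume $\langle x,x'\rangle\ge 0$. Comparing with the upper bound $\|T(x)\|+\langle x,x'\rangle\|y\|\le 1+\|y\|$ forces both $\langle x,x'\rangle>1-\eta/\|y\|$, so that $x$ lies in the slice $S(x',\eta/\|y\|)$, and $\|T(x)\|>1-\eta$. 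The same convexity argument, now used to descend from $t=\langle x,x'\rangle\|y\|$ to $t=1$, converts the hypothesis into $\|T(x)+u\|>2-C'\eta/\|y\|$; choosing $\eta$ small enough that both $\eta/\|y\|\le\varepsilon$ and $C'\eta/\|y\|\le 2\varepsilon$ produces the element required by (2).

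The only real subtlety is that $y$ is not assumed normalised, so the direction $u=y/\|y\|$ occurring in (2) must be reconciled with the unnormalised vector $y$ appearing in the operator $x'\otimes y$; this is exactly what the convexity and $1$-Lipschitz estimates for $t\mapsto\|T(x)+tu\|$ accomplish, and it is where a harmless case distinction between $\|y\|\ge 1$ and $\|y\|<1$ enters when one makes the constants $C,C'$ explicit. Everything else reduces to the triangle inequality and the observation that near-maximisers of the operator norm must have $\langle x,x'\rangle$ close to $1$.
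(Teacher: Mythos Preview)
Your argument is correct. Both implications go through once one checks the convexity/Lipschitz claims you invoke: with $f(t)=\|T(x)+tu\|$, the facts $f(0)\le 1$, $f(1)\ge 2-2\varepsilon$, $|f(t)-f(s)|\le|t-s|$ and convexity combine (separately for $\|y\|<1$ and $\|y\|\ge 1$) to give $f(\|y\|)\ge 1+\|y\|-\max(2,2\|y\|)\varepsilon$ in (2)$\Rightarrow$(1), and conversely $f(1)\ge 2-\max(1,2\|y\|)\,\eta/\|y\|$ from $f(\langle x,x'\rangle\|y\|)>1+\|y\|-\eta$ in (1)$\Rightarrow$(2). The slice membership and the replacement of $x$ by $-x$ are handled exactly as you say.

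The paper proceeds differently: it invokes a known scale-invariance lemma (Abramovich--Aliprantis--Burkinshaw) asserting that $\|T+R\|=\|T\|+\|R\|$ holds if and only if it holds with $R$ replaced by any positive multiple of $R$, which immediately reduces to the case $\|y\|=1$. After that normalisation, each direction is a two-line triangle-inequality computation with no case distinction. Your approach trades the external lemma for a self-contained argument; the price is the explicit convexity analysis and the split into $\|y\|\ge 1$ versus $\|y\|<1$, while the gain is that the proof stands on its own and makes transparent exactly how the constants depend on $\|y\|$.
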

\begin{proof}

(1)$\Rightarrow$(2). By Lemma~11.4 in \cite{abraali0} (or
\cite[p.~78]{Dirk-IrBull}) we can assume that $y \in S_Y$. By
hypothesis, $\|T+ x' \otimes y\|= 1 + \|y\|=2$, and then there is
an element $x \in B_X$ such that
$$
2-  \varepsilon \le \| T(x)+ \langle x, x' \rangle y)\| \le
\|T(x)\|+|\langle x,x' \rangle| \le 1 + | \langle x,x' \rangle|.
$$
Note that we can assume that $\langle x,x' \rangle >0$;  otherwise
 take $-x$ instead of $x$. Since for every $\eps>0$
\bea
2- \varepsilon &\le& \|T(x) + \langle x, x' \rangle y\| \le \|T(x)
+y\| + \|\langle x, x' \rangle y -y\| \\
&\le& \|T(x) +y\| + (1- \langle x, x' \rangle)\|y\| \le \|T(x) + y\|
+ \varepsilon,
\eea
we obtain~(2).

(2)$\Rightarrow$(1). Let $x' \in S_{X'}$ and $y \in Y$ and
consider the rank one  map $x' \otimes y$.  Again by Lemma~11.4 in
\cite{abraali0} we need consider only the case $\|y\|=1$. Let $
\varepsilon
>0$. Then there is an $x \in S(x',\varepsilon)$ such that
$\|y+ T(x)\| \ge 2- 2\varepsilon.$ Thus,
\bea
2- 2\varepsilon &\le& \| y + T(x)\| \le \|y- \langle x,x' \rangle
y\| + \|\langle x,x' \rangle y+ T(x)\| \\
&\le& (1- \langle x,x' \rangle)\|y\| + \|\langle x, x' \rangle y+
T(x)\| \le \varepsilon + \|x' \otimes y +T\|.
\eea
Consequently, $\|x' \otimes y\|+ \|T\| = 2 =\|x' \otimes y +T\|$.
\end{proof}

When a subset of linear maps $V \subset L(X,Y)$ is considered,
the following generalisation of the Daugavet property makes sense.

\begin{definition}
Let $X,Y$ be Banach spaces and let $T\dopu X \to Y$ be a norm one
operator. The Banach space $Y$ has the $T$-Daugavet property with
respect to $V \subset L(X,Y)$ if for every $R \in V$,
$$
\|T + R\|=1 + \|R\|.
$$
\end{definition}

This definition encompasses the notion of Daugavet centre given in
Definition~1.2 of \cite{boseka}.

\begin{corollary} \label{geom20}
Let $X$ and $Y$ be Banach spaces. Let $T\dopu X \to Y$ be an operator,
and consider a set of norm one linear forms $W \subset X'$. Let
$W \cdot Y=\{x' \otimes y\dopu  x' \in W,\ y \in Y \}$. The following
statements are equivalent.
\begin{itemize}
\item[(1)]
$Y$ has the $T$-Daugavet property with respect to $W \cdot Y$.

\item[(2)]
For every $y \in S_Y$, for every $x' \in W$ and for every
$\varepsilon >0$ there is an element $x \in S(x',\varepsilon)$
such that
$$
\| T(x)+y\| \ge 2- 2 \varepsilon.
$$

\end{itemize}
\end{corollary}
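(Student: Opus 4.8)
The plan is to deduce this directly from Proposition~\ref{geom0}, applied separately to each pair $(x',y)$. First I would unwind condition~(1): by the definition of the $T$-Daugavet property, $Y$ has it with respect to $W\cdot Y$ precisely when $\|T+x'\otimes y\|=1+\|x'\otimes y\|$ for every $x'\in W$ and every $y\in Y$. Since each $x'\in W$ has norm one, $\|x'\otimes y\|=\|x'\|\,\|y\|=\|y\|$, so condition~(1) is equivalent to demanding that $\|T+x'\otimes y\|=1+\|y\|$ for all $x'\in W$ and all $y\in Y$ (here $T$ is understood to be a norm one operator, as required by the definition).

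Next I would dispose of the degenerate case $y=0$, for which the equation reduces to $\|T\|=1$ and holds trivially. For $y\neq 0$ I would set $y_0=y/\|y\|\in S_Y$ and invoke the equivalence (1)$\Leftrightarrow$(2) of Proposition~\ref{geom0} with the fixed data $x'$ and $y$: the identity $\|T+x'\otimes y\|=1+\|y\|$ holds if and only if for every $\varepsilon>0$ there is an $x\in S(x',\varepsilon)$ with $\|T(x)+y_0\|\ge 2-2\varepsilon$. This is just Proposition~\ref{geom0} read off for the single rank one map $x'\otimes y$.

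Finally I would handle the quantifier bookkeeping. As $y$ ranges over $Y\setminus\{0\}$, the normalised vector $y_0=y/\|y\|$ ranges over all of $S_Y$, so requiring the displayed inequality for every nonzero $y$ is the same as requiring it for every $y_0\in S_Y$. Running this pointwise equivalence through every $x'\in W$ then converts condition~(1) into exactly condition~(2). I do not anticipate any genuine obstacle: the argument is a pointwise application of Proposition~\ref{geom0} combined with the positive homogeneity that lets one reduce to unit vectors $y_0$. The only thing to watch is that the outer quantifiers ``for every $x'\in W$'' and ``for every $y$'' are carried along consistently through the normalisation step.
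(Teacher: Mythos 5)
Your proof is correct and is precisely the argument the paper intends: the corollary is stated without a written proof, as an immediate consequence of Proposition~\ref{geom0}, and your pointwise application of that proposition for each pair $(x',y)$ --- together with the normalisation $y_0=y/\|y\|$, the trivial case $y=0$, and the quantifier bookkeeping --- is exactly that reasoning. No gaps.
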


\begin{definition}
Let $T\dopu X \to Y $ be a continuous linear map. Let $y' \in Y'$.
 We denote by $T_{y'}\dopu  X \to \mathbb R $ the linear
form given by $T_{y'}(x):= \frac{ \langle x,T'(y')
\rangle}{\|T'(y')\|}$ whenever $T'(y') \ne 0$. The natural set of
slices defined by $T$ is then
$$
S_T=\{S({T_{y'}}, \varepsilon)\dopu  0<\varepsilon < 1, \ y' \in Y',
\ T'(y') \ne 0 \}.
$$
If  $R\dopu X \to Y$ is another operator, we use the symbol $S_{R} \le
S_{T}$ to denote that for every slice $S$ in $S_{R}$ there is a
slice $S_1 \in S_{T}$ such that $S_1 \subset S$. We will say in
this case that $R$ is slice continuous with respect to $T$.
\end{definition}

For  operators $T$ having particular properties, slice continuity
allows easy geometric descriptions. Let $T\dopu X \to Y $ be an
operator between Banach spaces such that $T'$  is an isometry onto
its range, i.e., $T$ is a quotient map, and let $R\dopu X \to Y$ be an
operator. The following assertions are equivalent.
\begin{itemize}
\item[(1)]
 $S_R \le S_{T}.$
\item[(2)]
For every $y \in S_Y$, $y' \in S_{Y'}$ such that $R'(y') \ne 0$,
and every $\varepsilon
>0$ there is an element $y_0' \in S_{Y'}$  such that
$ (R_{y'} \otimes y)(S(T'(y_0'),\varepsilon)) \subset
B_\varepsilon(y). $
\end{itemize}

To see this just  notice that for every $y' \in S_{Y'}$ such that
$R'(y') \ne 0$ and $y \in S_Y$
$$
S( R_{y'},\varepsilon)= \{x \in B_X\dopu  1-\varepsilon \le R_{y'}(x)
\le 1\} = \{x \in B_X\dopu  \|R_{y'}(x)y-y\| \le \varepsilon \}.
$$

For a general operator $T$ the canonical example of when the
relation $S_{R} \le S_{T}$ holds is given by the case
 $R=P \circ T$, where $T\dopu X \to Y$  and $P\dopu Y \to Y$ are
operators. In this case, $\langle R(x), y' \rangle= \langle x,
T'(P'(y')) \rangle$, and so clearly $S_R \le S_{T}$. So the reason
is that we have the inclusion $R'(Y') \subset T'(Y')$. However,
there are examples of couples of operators $T,R$ such that $R$ is
slice continuous with respect to $T$ but $R \ne P \circ T$ for any
operator $P$. Let us show one of them.

\begin{example}
Let $T\dopu  C[0,1]\oplus_1 \mathbb R \to C[0,1]$, $T(f,\alpha)=f$, and
$R\dopu  C[0,1] \oplus_1 \mathbb R \to C[0,1]$, $R(f,\alpha)=f+\alpha
\mathbf 1$, where $\mathbf{1}$ stands for the constant one
function and $\oplus_1 $ denotes the direct sum with
the $1$-norm. Then $R$ and $T$ have norm one. Since the kernel of
$T$ is not contained in the kernel of $R$, we do not have $R= P
\circ T$ for any operator $P$. But the slice condition holds. A
simple calculation gives that for every $\mu$ in the unit sphere
of $C[0,1]^*$, $\|T'(\mu)\|=\|R'(\mu)\|=1$. Let $S_r \in S_R$ be
the slice generated by any $\mu\in C[0,1]^*$ of norm one and
$\varepsilon>0$. We claim that the slice $S_t$ generated by the
same $\mu$ and $\varepsilon/2$ is contained in $S_r$. Indeed, if
$(f,\alpha)$ is in the unit ball and $\langle\mu, f\rangle \ge
1-\varepsilon/2$, then $\|f\|\ge 1-\varepsilon/2$ and hence
$|\alpha | \le \varepsilon/2$. Therefore, for such $(f,\alpha)$,
$\langle\mu, f+\alpha\mathbf 1 \rangle \ge \langle\mu, f\rangle -
|\alpha| \ge 1-\varepsilon$, and so the inclusion $S_t \subset
S_r$ holds.

\end{example}

\begin{remark} \label{re10}
 Let $T,R\dopu X
 \to Y$ be a couple of operators, $\|T\|=1$.  Notice that
 Proposition~\ref{geom0} gives that for every $y \in S_Y$  and $y' \in   
Y'$ such that $R'(y') \ne 0$, the following are equivalent.
\begin{itemize}

\item[(1)] $ \| T+ R_{y'} \otimes y\|=2.$

\item[(2)]
For every $\varepsilon>0$ there is an element $x \in S(
R_{y'},\varepsilon)$ such that
$$
\|  T(x) + y \| \ge 2- 2 \varepsilon.
$$
\end{itemize}

Thus for the case $R=T$ and assuming that $T'$ is an isometry onto
its range we obtain that $Y$ has the Daugavet property if and only
if $Y$ has the $T$-Daugavet property with respect to the set
$\{T_{y'}\dopu y' \in Y' \setminus \{ 0\} \} \cdot Y$. This is  a
direct consequence of the well-known characterisation of the
Daugavet property (see Lemma~2.1 in \cite{ams2000}) and
Corollary~\ref{geom20}. Consequently, for any other $R$, if $Y$ has
the 
Daugavet property and $S_R \le S_{T}$, we obtain that for every $y
\in S_Y$ and every $y' \in Y'$ such that $R'(y') \ne 0$,
$$
\|T + R_{y'} \otimes y\|=2.
$$

Note that something like  the slice continuity requirement $S_R \le S_{T}$ is
necessary for this to be true; indeed, a quotient map
$T\dopu X \to Y$ is not necessarily a Daugavet centre, even if the
spaces involved have the Daugavet property. Take the operators
$T,R\dopu L^1[0,1] \oplus_1 L^1[1,2] \to L^1[0,1]$ given by
$T((f,g)):=f$ and $R((f,g)):=(\int_1^2 g \,dx) \cdot h_0$, $(f,g)
\in L^1[0,1] \oplus_1 L^1[1,2]$, where 
$h_0$ is a norm one function in $L^1[0,1]$.
Clearly, $\|T\|=\|R\|=1$, but $\|T+R\| \le 1$.
\end{remark}

\begin{theorem} \label{weakcom0}
Let $Y$ be a Banach space with the Daugavet property. Let $T\dopu X \to
Y$ be an operator such that $T'$ is an isometry onto its range and
$R\dopu X \to Y$ a norm one operator. Then:
\begin{itemize}
\item[(1)]
 If for every $\varepsilon >0$ there is a
slice $S_0 \in S_{T}$ and an element $y \in S_Y$ such that $R(S_0)
\subset B_\varepsilon(y)$, then
$$
\| T + R \|= 2.
$$
\item[(2)]
If $S_R \le S_{T}$ and $R$ is weakly compact, then
$$
 \|T  + R\|= 2.
$$
\end{itemize}
\end{theorem}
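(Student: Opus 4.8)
The plan for (1) is to reduce the statement to the Daugavet-type slice condition that $T$ already satisfies and then exploit that $R$ is almost constant on a slice of the domain. Since $Y$ has the Daugavet property and $T'$ is an isometry onto its range, Remark~\ref{re10} (through Corollary~\ref{geom20}) tells me that $Y$ has the $T$-Daugavet property with respect to $\{T_{y'}\dopu y'\in Y'\setminus\{0\}\}\cdot Y$; equivalently, for every $y\in S_Y$, every functional $T_{y'}$ and every $\delta>0$ there is $x\in S(T_{y'},\delta)$ with $\|T(x)+y\|\ge 2-2\delta$. I would then fix $\varepsilon>0$ and take the slice $S_0=S(T_{y_0'},\delta_0)\in S_T$ together with the point $y\in S_Y$ furnished by the hypothesis, so that $R(S_0)\subset B_\varepsilon(y)$. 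For any $\delta\le\delta_0$ the inclusion $S(T_{y_0'},\delta)\subset S_0$ lets me pick $x\in S_0$ with $\|T(x)+y\|\ge 2-2\delta$, and since $x\in S_0$ also $\|R(x)-y\|\le\varepsilon$, whence
\[
\|T(x)+R(x)\|\ge\|T(x)+y\|-\|R(x)-y\|\ge 2-2\delta-\varepsilon.
\]
Letting $\delta\to 0$ gives $\|T+R\|\ge 2-\varepsilon$, and then $\varepsilon\to 0$ yields $\|T+R\|\ge 2$; the reverse inequality is clear since $\|T\|=\|R\|=1$.

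For (2) the plan is to \emph{verify the hypothesis of} (1) by using the weak compactness of $R$ to manufacture, for each $\varepsilon>0$, a slice in $S_R$ on which $R$ oscillates little and whose image sits close to $S_Y$, and then to transport it into $S_T$ by slice continuity. Concretely, I set $K=\overline{\co}\,R(B_X)$, which is weakly compact and convex (closed convex hulls of relatively weakly compact sets are weakly compact), with $\sup_{z\in K}\|z\|=\|R\|=1$. I aim to locate a slice $S_0$ of $K$ of diameter less than $\varepsilon/2$ all of whose points have norm larger than $1-\varepsilon/2$; writing it as $\{z\in K\dopu\langle z,y^*\rangle>\mu\}$ with $\|y^*\|=1$ and $R'(y^*)\ne 0$, its preimage contains a slice $S=S(R_{y^*},\beta)\in S_R$ with $R(S)\subset S_0$. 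Fixing any $z_0\in S_0$ and putting $y=z_0/\|z_0\|\in S_Y$, I get $R(S)\subset B_\varepsilon(y)$. Now $S_R\le S_T$ supplies $S_1\in S_T$ with $S_1\subset S$, so $R(S_1)\subset R(S)\subset B_\varepsilon(y)$; as $\varepsilon>0$ was arbitrary, part~(1) applies and gives $\|T+R\|=2$.

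The heart of the matter, and the step I expect to be the main obstacle, is producing the thin \emph{and} tall slice $S_0$ of $K$. Thinness alone is the dentability of $K$, but I also need the slice to lie near the unit sphere, and this does not follow from dentability at an arbitrary high-norm point, since the supremum of the norm need not be attained on a weakly compact set. The clean way around this is to invoke that a weakly compact convex set has the Radon--Nikod\'ym property, hence equals the closed convex hull of its denting (strongly exposed) points; because $\sup_{z\in K}\|z\|=1$, these points cannot all lie in $(1-\delta)B_Y$, so there is a denting point $d$ with $\|d\|>1-\delta$, and a slice of sufficiently small diameter containing $d$ then automatically consists of vectors of norm close to $1$. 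A little care is needed to arrange the defining level $\mu$ so that $0<\mu<\|R'(y^*)\|$, which is what makes the preimage a genuine slice $S(R_{y^*},\beta)$ with $0<\beta<1$; this is only a matter of shrinking the slice slightly. Once $S_0$ is secured, the passage through $S_R\le S_T$ and the appeal to~(1) are routine.
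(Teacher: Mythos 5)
Your proposal is correct and follows essentially the same route as the paper: part (1) via the $T$-Daugavet property from Remark~\ref{re10} and Corollary~\ref{geom20} combined with the triangle inequality on the given slice, and part (2) by using weak compactness of $K=\overline{R(B_X)}$ (the Lindenstrauss--Troyanski fact that such a set is the closed convex hull of its strongly exposed/denting points) to produce a slice in $S_R$ of small diameter mapped near the unit sphere, then transporting it into $S_T$ by slice continuity and invoking part (1). Your treatment is in fact slightly more careful than the paper's sketch on two routine points: the normalisation $y=z_0/\|z_0\|$ and the choice of the level $\mu$ making the preimage a genuine slice $S(R_{y^*},\beta)$ with $0<\beta<1$.
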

\begin{proof}
(1) Take $\varepsilon >0$. Then there are $S_0=S(T_{y'_0},\delta)
\in S_{T}$ and $y \in S_Y$ such that for every $x \in S_0$, $\|
R(x)- y \| \le \varepsilon$. We can assume that $\delta \le
\varepsilon$. Since $Y$ has the Daugavet property, $Y$ has the
$T$-Daugavet property with respect to the set $\{ T_{y'}\dopu  y' \in
Y' \setminus \{0\} \} \cdot Y $ (see Remark \ref{re10} above).
Therefore, by Corollary~\ref{geom20} we find an element $x \in
S_0$ such that
$$
\|T+R\| \ge \| T(x)+ y\| - \|y-R(x)\| \ge 2- \varepsilon -2 \delta
\ge 2- 3 \varepsilon.
$$
Since this holds for every $\varepsilon >0$, the proof of (1) is
complete.

The proof of (2) follows the same argument as the one for
operators in spaces with the Daugavet property (see
\cite[Th.~2.3]{ams2000}), so we only sketch it. Assume that
$\|R\|=1$. Since 
by hypothesis $K=\overline{R(B_X )}$ is a convex weakly compact
set, it is the closed convex hull of its strongly exposed points.
Since this set is convex and $\|R\|=1$, there is a strongly
exposed point $y_0 \in K$ such that $\|y_0\| \le 1$ and $\|y_0\| >
1- \varepsilon$. Take a functional $y'_0$ that strongly exposes
$y_0$ and satisfies $\langle y_0,y_0'\rangle= \max_{y \in K}
\langle y,y_0'\rangle =1$. It can be proved by contradiction that
there is a slice $S \in S_R$ such that  $R(S)$ is contained in the
ball $B_\varepsilon(y_0)$ (see the proof of
\cite[Th.~2.3]{ams2000}). Since $S_R \le S_{T} $, there is also a
slice $S_0 
\in S_{T}$ such that $R(S_0) \subset R(S) \subset
B_\varepsilon(y_0)$. Then part (1) gives the result.
\end{proof}

The example in Remark~\ref{re10} makes it clear that some condition
like slice continuity
is necessary for (2) in Theorem \ref{weakcom0} to be true. The
following variation of this example gives a genuine weakly compact
operator that is not 
of finite rank which does not satisfy the Daugavet equation.
Take $T$ defined as in Remark~\ref{re10} and $R\dopu L^1[0,1] \oplus_1
L^2[1,2] \to L^1[0,1]$ given by $R((f,g)):=g(x-1)$. This operator is weakly
compact and $\|R\|=\|T\|=1$, but again the norm of the sum of both
operators is less than $2$.

\begin{remark} \label{themainrem}
Notice that the condition in (1) on the existence of a slice $S
\in S_T$ such that $R(S) \subset B_\varepsilon(y)$ can be
substituted by the existence of a slice $S \in S_T$ and a $\delta
>0$ such that $R(S+ \delta B_X) \subset B_\varepsilon(y)$. The
argument given in the proof based on this fact makes it also  clear 
that the relation $S_T \le S_R$ can be substituted by the
following weaker one and the result is still true: For every slice
$S \in S_R$ and $\delta >0$ there is a slice $S_1 \in S_T$ such
that
$$
S_1  \subset S +  \delta B_X.
$$
\end{remark}

\section{Bilinear maps and the Daugavet property}

In this section we analyse the Daugavet property for bilinear maps
defined on Banach spaces. Our main idea is to provide a framework
for the understanding of several new Daugavet type properties and
prove some general versions of the main theorems that hold for the
case of the Daugavet property. We centre our attention on the
extension of the Daugavet equation for weakly compact bilinear
maps. Let $X,Y$ and $Z$ be Banach spaces. Consider a norm one
continuous bilinear map $B\dopu X \times Y \to Z$. Then we can consider
the linearisation $T_B\dopu X \hat{\otimes}_\pi Y \to Z$, where $X
\hat{\otimes}_\pi Y$ is the projective tensor product with the
projective norm $\pi$ (see for instance \cite[Sec.~3.2]{deflo} or
\cite[Th.~2.9]{Ryan}). 
This linear
operator will provide meaningful results for bilinear maps by
applying the ones of Section~2. However, a genuinely geometric
setting for bilinear operators -- slices, isometric
equations, \dots\ -- will  also be defined  in this section
in order to provide the specific links between the (bilinear)
slice continuity and the Daugavet equation.

We will consider bilinear operators $B_0\dopu  X \times Y \to Z$
satisfying that $B_0(U_X \times U_Y)=U_Z$. Obviously,  such a
map has always convex range, i.e., $B_0(U_X \times U_Y)$ is a
convex set. We will say that a map satisfying these conditions is
a norming bilinear map. If $B_0$ is such a bilinear operator, we
will say that a Banach space $Z$ has the $B_0$-Daugavet property
with respect to the class of bilinear maps $V \subset B(X \times
Y, Z)$ if
$$
\|B_0+B\|= 1 + \|B\|
$$
for all $B \in V$. Notice that  $Z$ has the $B_0$-Daugavet
property  with respect to $V$ if and only if it has the
$T_{B_0}$-Daugavet property with respect to the set $\{T_B\dopu  X
\hat{\otimes}_\pi Y \to Z\dopu B \in V\}$. Let us consider some examples.

\begin{example} \label{primex}
(1)
Take a Banach space $X$ and consider the bilinear form $B_0\dopu X
\times X' \to \mathbb{R}$ given by $B_0(x,x')=\langle x, x'
\rangle$, $x \in X$, $x' \in X'$. Consider the set
\bea
\lefteqn{V= \{B_T\dopu X \times X'\to\R\dopu 
B_T(x,x')=\langle T(x), x' \rangle, } \hspace{4cm} \\
&&T\dopu X \to X \ \textrm{is weakly compact} \}.
\eea
Then notice that
$$
\sup_{x \in B_X, x' \in B_{X'}} | B_0(x,x')+B_T(x,x') |  = \sup_{x
\in B_X, x' \in B_{X'}} | \langle x+T(x),x' \rangle |= \|\Id + T\|
$$
and $\|B_0\|+\|B_T\|= 1+ \|T\|$. Therefore $\mathbb R$ has the
$B_0$-Daugavet property with respect to $V$ if and only if $X$ has
the Daugavet property (see Theorem~2.3 in \cite{ams2000}).

(2)
Take a measure space $(\Omega,\Sigma,\mu)$ and a couple of Banach
function spaces $X(\mu)=X$ and $Z(\mu)=Z$ over $\mu$ satisfying
that the space of multiplication operators $X^Z$ is a saturated
Banach function space over $\mu$ and $X$ is $Z$-perfect, i.e.,
$(X^Z)^Z=X$, and $U_{X} \cdot U_{X^Z}=U_Z$ (here $\cdot$ represents the
pointwise product of functions). Consider the bilinear map $B_0\dopu X
\times X^Z \to Z$ given by $B_0(f,g)= f \cdot g$, $f \in X$, $g
\in X^Z$ (see \cite{CDS} for definitions and results regarding
multiplication operators on Banach function spaces). Consider the
set
\bea
\lefteqn{V= \{B_S\dopu X \times X^Z \to Z \dopu   B_S(f,g)= S(f \cdot
  g),} 
\hspace{4cm} \\
&& S\dopu Z \to Z \ \textrm{is weakly compact} \}.
\eea
Then
$$
\sup_{f \in B_X, \ g \in B_{X^Z}} \| B_0(f,g)+B_S(f,g) \|_Z
= \sup_{f \in B_X, \ g \in B_{X^Z}} \| f \cdot g +S(f\cdot g) \|_Z=
\|\Id + S\|
$$
and $\|B_0\|+\|B_S\|= 1+ \|S\|$. Therefore $Z$ has the
$B_0$-Daugavet property with respect to $V$ if and only if $Z$ has
the Daugavet property (see again Theorem~2.3 in \cite{ams2000}).

(3)
Take $1 < p < \infty$, its conjugate index $p'$, a measurable
space $(\Omega, \Sigma)$, a Banach space $Z$ and a countably
additive vector measure $m\dopu  \Sigma \to Z$.  Consider the
corresponding spaces of $m$-integrable functions $L^p(m)$ and
$L^{p'}(m)$, and the bilinear map $B_0\dopu  L^p(m)\times L^{p'}(m) \to
Z$ given by the composition of the multiplication and the
integration map $I_m\dopu L^1(m) \to Z$, i.e., $B_0(f,g)= \int fg \, dm
$. This map is well defined and continuous (see
\cite[Chapter~3]{libro} for the main definitions and results on the
spaces 
$L^p(m)$). Assume also that $B_0(U_{L^p(m)}\times U_{L^{p'}(m)})=
I_m(U_{L^p(m)} \cdot U_{L^{p'}(m)})$ coincides with the open unit
ball of $Z$. Take the set
\bea
\lefteqn{U= \big\{B_R\dopu L^p(m) \times L^{p'}(m) \to Z \dopu  } 
\hspace{2cm}\\
&&  B_R(f,g):= R(I_m(f \cdot g)), \
R\dopu Z \to Z \ \textrm{rank one} \big\}.
\eea
Since
$$
\|B_0 + B_R\| = \sup_{f \in B_{L^p(m)},\ g \in B_{L^{p'}(m)}} \Bigl\|
\int_\Omega fg \, dm + R \Bigl(\int_\Omega fg \, dm\Bigr) \Bigr\|_Z 
= \|\Id + R\|
$$
and $\|B_0\|+\|B_R\|= 1+ \|R\|$, we obtain again that $Z$ has the
$B_0$-Daugavet property with respect to $U$ if and only if $Z$ has
the Daugavet property.
\end{example}

\begin{remark}
More examples can be given by considering the following bilinear
maps:

(i) $B_{C(K)}\dopu C(K) \times C(K) \to C(K)$, $B_{C(K)}(f,g)= f \cdot
g$.

(ii) $B_*\dopu L^1(\R) \times L^1(\R) \to L^1(\R)$, $B_*(f,g)=
f * g$, where $*$ is the convolution product. In this case we
have $B_*(U_{L^1(\R)},U_{L^1(\R)})=U_{L^1(\R)}$ as a consequence of
 Cohen's Factorisation Theorem (see Corollary~32.30 in
\cite{hewross}).

(iii) For a $\sigma$-finite $\mu$,  $B_{L^\infty}\dopu  L^\infty(\mu)
\times L^1(\mu) \to \mathbb{R}$ given by $B_{L^\infty}(f,g)= \int
fg \, d \mu$.

\end{remark}

Bilinear operators for which the Daugavet equation will be shown
to hold -- together with norming bilinear maps -- are weakly
compact operators with convex range. Although the usual way of
finding such a map is to compose a bilinear map with convex range
and a weakly compact linear one, other examples can be given. Let
us show one of them that is in fact  not norming.

\begin{example}
Consider a constant~$1$ $p$-convex reflexive Banach function space
$X$. In particular, $X$ must be order continuous. Take $f'_0 \in
S_{X'}$ and $f_0 \in S_X$ and define the bilinear map $B\dopu X \times
X_{[p/p']} \to X_{[p]}$ given by $B(f,g)= \langle f, f'_0 \rangle
f_0 \cdot g$. Note that $\|B\|=1$. Let us show that the (norm)
closure $K=\overline{B(B_X \times B_{X_{[p/p']}})}$ is a convex
weakly compact set.

Let $z_1,z_2 \in B(B_X \times B_{X_{[p/p']}})$. Let $f_1,g_1,f_2$
and $g_2$ such that $B(f_1,g_1)=z_1$ and
 $B(f_2,g_2)=z_2$. Take $0 < \alpha <
1$ and consider the element $\alpha z_1+(1-\alpha) z_2$. Let us
prove that it belongs to $B(B_X \times B_{X_{[p/p']}})$. Notice
that since $-1 \le \langle f,f_0' \rangle \le 1$ for every $f \in
B_X$, $g_3=\alpha \langle f_1,f'_0 \rangle g_1 +(1-\alpha) \langle
f_2, f'_0 \rangle g_2$ belongs to $B_{X_{[p/p']}}$. Take now an
element $f_3 \in B_X$ such that $\langle f_3, f'_0 \rangle =1$ (it
exists since $X$ is reflexive), and note that
$$
B(f_3,g_3)= \alpha z_1 + (1-\alpha) z_2.
$$
So, $K$ is convex. Notice that ${B(B_X \times B_{X_{[p/p']}})}$ is
also relatively weakly compact; it is enough to observe that the
set is uniformly $\mu$-absolutely continuous (see for instance
Remark~2.38 in \cite{libro} and the references therein), i.e., that
$$
\lim_{\mu(A) \to 0} \sup_{z \in K} \|z \chi_A\|=0.
$$
But this is a direct consequence of the fact that $X$ is order
continuous (see for instance \cite[Th.~1.c.5 and Prop.~1.a.8]{lint})
and the H\"older inequality for the norms of $p$-th 
power spaces (adapt \cite[Lemma~2.21]{libro} or
\cite[Prop.~1.d.2(i)]{lint}). For every $z = \langle f, f'_0 \rangle f_0
\cdot g \in B(B_X,B_{X_{[p/p']}})$ and $A \in \Sigma$,
$$
\| z \chi_A\|_{X_{[p]}} = \| \langle f, f'_0 \rangle f_0 \cdot
g\|_{X_{[p]}} \le |\langle f, f'_0 \rangle| \|f_0 \chi_A\|_X \cdot
\|g\|_{X_{[p/p']}}.
$$
Since $X$ is order continuous, $\|f_0 \chi_A\|_X \to 0$ when
$\mu(A) \to 0$, which  gives the result.

\end{example}

Let us now start  to adapt the results of the previous section. In
order to do so, let us define the natural set of slices associated
to a norm one bilinear form $b \in B(X \times Y,\mathbb R)$. Let
$0<\varepsilon <1$. Following the notation given for the linear
case, we define $S(b,\varepsilon)$ by
$$
S(b,\varepsilon):= \{(x,y)\dopu  x \in B_X,\ y \in B_Y,\ b(x,y) \ge
1-\varepsilon \}.
$$
The following result shows the relation between slices defined by
a bilinear form and the ones defined
  by the linearisation of this map.

\begin{lemma} \label{lemacompa}
Let
$b \in B(X \times Y, \mathbb R)$ be a norm one bilinear form (i.e.,
$T_b \in (X \hat{\otimes}_\pi Y)'$ with norm one) and $\varepsilon
>0$. Then: 
\begin{itemize}

\item[(1)]
There is an  elementary tensor $x \otimes y$ such that $\|x\|=\|y\|=1$
and $x \otimes y \in S(T_b,\varepsilon)$. 

\item[(2)] $\overline{\co \{x \otimes y\dopu  (x,y) \in
    S(b,\varepsilon) \}} \subset S(T_b,\varepsilon).$ 

\item[(3)] $S(T_b,\varepsilon^2) \subset  \overline{\co \{x \otimes
    y\dopu  (x,y) \in S(b,\varepsilon) \}} + 4 \varepsilon B_{X
    \hat{\otimes}_\pi Y}.$ 
\end{itemize}
\end{lemma}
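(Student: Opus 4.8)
The plan is to use three standard facts about the projective tensor product: first, an elementary tensor has projective norm $\pi(x\otimes y)=\|x\|\,\|y\|$; second, under the duality $(X\hat{\otimes}_\pi Y)'=B(X\times Y,\R)$ one has $T_b(x\otimes y)=b(x,y)$ and $\|T_b\|=\|b\|$; and third, the closed unit ball $B_{X\hat{\otimes}_\pi Y}$ is the closed convex hull of the elementary tensors $x\otimes y$ with $x\in B_X$, $y\in B_Y$. With these in hand, parts (1) and (2) are immediate. For (1), since $\|b\|=1$ I pick unit vectors $x,y$ with $b(x,y)>1-\varepsilon$ (adjusting a sign if necessary); then $\pi(x\otimes y)=1$ and $T_b(x\otimes y)=b(x,y)>1-\varepsilon$, so $x\otimes y\in S(T_b,\varepsilon)$. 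For (2), every generator $x\otimes y$ with $(x,y)\in S(b,\varepsilon)$ satisfies $\pi(x\otimes y)=\|x\|\,\|y\|\le 1$ and $T_b(x\otimes y)=b(x,y)\ge 1-\varepsilon$, hence lies in $S(T_b,\varepsilon)$; since $S(T_b,\varepsilon)$ is closed and convex (being the intersection of $B_{X\hat{\otimes}_\pi Y}$ with the half-space $\{T_b\ge 1-\varepsilon\}$), it contains the closed convex hull of these generators.

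The substance is in (3). Let $C:=\overline{\co\{x\otimes y: (x,y)\in S(b,\varepsilon)\}}$ and take $u\in S(T_b,\varepsilon^2)$, so $\pi(u)\le 1$ and $T_b(u)\ge 1-\varepsilon^2$. Fix a small $\eta>0$ and choose a representation (finite, after truncating the defining series) $u=\sum_i x_i\otimes y_i$ with $\sum_i\|x_i\|\,\|y_i\|\le 1+\eta$, which is possible because $\pi(u)\le 1$. Discarding zero terms and normalising, I write $x_i\otimes y_i=\lambda_i(x_i'\otimes y_i')$ with $\|x_i'\|=\|y_i'\|=1$ and $\lambda_i=\|x_i\|\,\|y_i\|\ge 0$. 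Then $T_b(u)=\sum_i\lambda_i\,b(x_i',y_i')$ and each $b(x_i',y_i')\le 1$. Split the indices into the good set $I_+=\{i: b(x_i',y_i')\ge 1-\varepsilon\}$ --- for which $(x_i',y_i')\in S(b,\varepsilon)$ --- and the bad set $I_-$ where $b(x_i',y_i')<1-\varepsilon$. The key estimate comes from feeding the bounds $b\le 1$ on $I_+$ and $b<1-\varepsilon$ on $I_-$ into $1-\varepsilon^2\le T_b(u)$, which gives $\varepsilon\sum_{i\in I_-}\lambda_i\le \sum_i\lambda_i-(1-\varepsilon^2)\le \varepsilon^2+\eta$, so the bad mass is at most about $\varepsilon$.

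Now write $u=v+w$ with $v=\sum_{i\in I_+}\lambda_i(x_i'\otimes y_i')$ and $w=\sum_{i\in I_-}\lambda_i(x_i'\otimes y_i')$. Then $\pi(w)\le\sum_{i\in I_-}\lambda_i$ is controlled by the previous line, and the total good mass $\Lambda_+=\sum_{i\in I_+}\lambda_i$ satisfies $1-\varepsilon-\varepsilon^2-\eta/\varepsilon\le\Lambda_+\le 1+\eta$, so $|\Lambda_+-1|$ is also of order $\varepsilon$. The normalisation $c:=\Lambda_+^{-1}v$ is a genuine convex combination of generators of $C$, hence $c\in C$ and $\pi(c)\le 1$; from $v=\Lambda_+ c$ I get $u-c=(\Lambda_+-1)c+w$, so $\pi(u-c)\le|\Lambda_+-1|\,\pi(c)+\pi(w)\le 2\varepsilon+\varepsilon^2+O(\eta/\varepsilon)$. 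For $\eta$ small enough this is $\le 4\varepsilon$ (in fact below $3\varepsilon$, since $\varepsilon<1$), which gives a $c\in C$ with $\pi(u-c)\le 4\varepsilon$ and hence $u\in C+4\varepsilon B_{X\hat{\otimes}_\pi Y}$.

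The main obstacle is bookkeeping rather than ideas: I must make sure the near-optimal representation can be taken finite (truncating the defining series while keeping both $T_b(u)$ and $\sum_i\|x_i\|\,\|y_i\|$ under control), that $\Lambda_+>0$ so the normalisation $\Lambda_+^{-1}v$ is legitimate, and that the residual errors coming from $\eta$ are absorbed into the final $4\varepsilon$. The clean split at the threshold $1-\varepsilon$ is precisely what converts the quadratically small defect $\varepsilon^2$ of $u$ into a linearly small bad mass of order $\varepsilon$, which explains why (3) is stated with $\varepsilon^2$ on the left and $\varepsilon$ on the right.
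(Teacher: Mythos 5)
Your proof is correct and, for the substantive part (3), follows essentially the same argument as the paper: take a near-optimal representation $\sum_i \lambda_i\, x_i'\otimes y_i'$, split the indices according to whether $b(x_i',y_i')\ge 1-\varepsilon$, use the $\varepsilon^2$-defect to bound the mass of the bad indices by roughly $\varepsilon$, and renormalise the good part to obtain the element of the closed convex hull; your $\eta$-bookkeeping (truncation to a finite sum, $\Lambda_+>0$, absorbing $O(\eta/\varepsilon)$ terms) closes within the same $4\varepsilon$ bound, where the paper instead avoids truncation by perturbing $u$ to an element $v$ of the open unit ball and working with the infinite representation directly. Parts (1) and (2) are also fine; your direct appeal to $\|b\|=\sup_{\|x\|=\|y\|=1}|b(x,y)|$ in (1) is in fact slightly shorter than the paper's route, which approximates an element of $S(T_b,\varepsilon/2)$ by a finite convex combination of norm-one elementary tensors and extracts one good tensor by averaging.
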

\begin{proof}
(1) Take a norm one element $t \in S(T_b, \varepsilon/2)$. Then there
is an element $t_0=\sum_{i=1}^n \alpha_i x_i \otimes y_i \in X \otimes
Y$ such that $\|x_i\|=\|y_i\|=1$, $\alpha_i >0$ for all $i=1,\dots,n$,
$\sum_{i=1}^n \alpha_i=1$ and  $\pi(t - t_0) < \varepsilon/2$. Then 
$$
\langle t_0, T_b \rangle = \langle t-t_0, T_b \rangle + \langle t, T_b
\rangle \ge \langle t, T_b \rangle -|\langle t-t_0, T_b \rangle| > 1-
\frac{\varepsilon}{2}-\frac{\varepsilon}{2}, 
$$
and so $t_0 \in S(T_b, \varepsilon)$. Assume (by changing the signs of
some of the $x_i$ if necessary) that 
$b(x_i,y_i) >0$ for all $i$. Then
$$
\sum_{i=1}^n \alpha_i b(x_i,y_i) \ge \sum_{i=1}^n \alpha_i (1- \varepsilon),
$$
and so there is at least one index $i_0$ such that $b(x_{i_0},
y_{i_0}) \ge 1- \varepsilon$. Consequently, $x_{i_0} \otimes y_{i_0}
\in S(T_b,\varepsilon)$. 

(2) is a direct consequence of the fact that $S(T_b,\varepsilon)$ is
norm closed in the projective tensor product.

(3) Let us show now that  $S(T_b,\varepsilon^2) \subset
\overline{\co \{x \otimes y\dopu  (x,y) \in S(b,\varepsilon) \}} +
4\varepsilon B_{X \hat{\otimes}_\pi Y}$. Let $u\in
S(T_b,\varepsilon^2)$. Find $v$ such that $\|v\|<1$, $T_b(v)\ge
1-\varepsilon^2$, and $\|v-u\|\le\varepsilon$. Write
$v=\sum_{i=1}^\infty \alpha_i x_i \otimes y_i$ with all
$\|x_i\|=\|y_i\| =1$, $\alpha_i\ge0$ and
$\alpha:=\sum_{i=1}^\infty \alpha_i <1$. Note that
$\alpha\ge1-\varepsilon^2$. Now consider
\begin{eqnarray*}
I & := & \{i \in \mathbb N\dopu  b(x_i,y_i) \ge 1-\varepsilon\}
=  \{i\in \mathbb N\dopu  (x_i,y_i)\in S(b,\varepsilon)\}, \\
J & := & \{i\in \mathbb N\dopu  b(x_i,y_i) < 1-\varepsilon\}.
\end{eqnarray*}
Let $\alpha_I:= \sum_{i\in I} \alpha_i$ and  $\alpha_J:=
\sum_{i\in J} \alpha_i$. We have
$$
1-\varepsilon^2 \le \sum_{i=1}^\infty \alpha_i b(x_i,y_i) \le
\alpha_I + \alpha_J (1-\varepsilon) < 1-\varepsilon\alpha_J
$$
and hence $\alpha_J<\varepsilon$. Let $w=\sum_I
\frac{\alpha_i}{\alpha_I} x_i \otimes y_i \in  \overline{\co \{x
\otimes y\dopu  (x,y) \in S(b,\varepsilon) \}} $; we then have (note
that $v=\alpha_I w + \sum_J \alpha_i x_i \otimes y_i$)
$$
\|v-w\| \le |\alpha_I-1| \|w\| + \alpha_J.
$$
Furthermore $0\le 1-\alpha_I = \alpha_J + 1 -\alpha \le
\varepsilon + \varepsilon^2$; hence
$$
\|u-w\| \le \|u-v\| + \|v-w\| \le \varepsilon + ((\varepsilon +
\varepsilon^2) +\varepsilon^2) \le 4\varepsilon,
$$
as claimed.
\end{proof}

If $z \in Z$, we define $b_z\dopu X \times Y \to Z$ as the (rank one)
bilinear map given by $b_z(x,y)=b(x,y) z$, $x \in X$, $y \in Y$.
Let $B\dopu X \times Y \to Z$ be a continuous bilinear map. In what
follows we need to introduce some elements related to duality and
adjoint bilinear operators. Following Ramanujan and Schock in
\cite{ramsch}, we consider the adjoint operator $B^\times\dopu Z' \to
B(X,Y)$ given by $B^\times (z')(x,y)= \langle B(x,y), z' \rangle$
(this definition does not coincide with the one given originally by
Arens in \cite{arens}, although the setting is of course the
same). $B^\times$ is a linear and continuous operator, and
$\|B\|=\|B^\times\|$.

\begin{definition}
Let $B\dopu X \times Y \to Z$ be a continuous bilinear map. Let $z' \in
S_{Z'}$ and consider the adjoint bilinear form $\langle B,
z'\rangle\dopu X \times Y \to \mathbb{R}$ given by $\langle B,z'
\rangle(x,y)=B^\times (z')(x,y)$.
 We denote by $B_{z'}\dopu  X \times Y \to \mathbb{R}$ the bilinear
form given by $B_{z'}(x,y)= \frac{\langle B(x,y), z'
\rangle}{\|\langle B, z' \rangle\|}$ whenever $\|\langle B, z'
\rangle\| \ne 0$ and by $\langle B, Z' \rangle$ the set of all
these bilinear forms. The natural set of slices defined by $B$ is
then
$$
S_B=\{S(B_{z'}, \varepsilon)\dopu  0<\varepsilon < 1, \ \|B_{z'}\| \ne
0 \}.
$$
If $B_1$ is another (continuous) bilinear map, $B_1\dopu X \times Y \to
Z$, we use the symbol $S_{B} \le S_{B_1}$ to denote that for every
slice $S$ in $S_{B}$ there is a slice $S_1 \in S_{B_1}$ such that
$S_1 \subset S$. We can also consider  the relation $S_{T_B} \le
S_{B_1}$ to be defined in the same way: for every $S \in S_{T_B}$
there is a slice $S_1 \in S_{B_1}$ such that the set $\{x \otimes
y\dopu  (x,y) \in S_1 \}$ is included in $S$. Lemma~\ref{lemacompa}
gives an idea of how this relation works. 
\end{definition}

As in the linear case, the canonical example of the relation $S_{B}
\le S_{B_1}$ between 
sets of slices associated to two bilinear maps is given by
bilinear maps $B$ that are defined as a composition $T \circ B_1$,
where $B_1\dopu X \times Y \to Z$ is a continuous bilinear map and $T\dopu Z
\to Z$ is a continuous operator. In this case, $\langle B(x,y),
z' \rangle= \langle B_1(x,y), T'(z') \rangle$, and so clearly
$S_B \le S_{B_1}$.
Let us show some examples.

\begin{example} \label{exslices}
Let $(\Omega, \Sigma, \mu)$ be a finite measure space and consider
a rearrangement invariant (r.i.)\  constant~$1$ $p$-convex Banach
function space $X(\mu)$ (see \cite[p.~28 and Sections~1.d,
2.e]{lint} or \cite[Ch.~2 and p.~202]{libro}). In this case,
$(X(\mu)_{[p]})'$ is also r.i. Take a measurable bijection
$\Phi\dopu \Omega \to \Omega$ such that $\mu( \Phi(A))=\mu(A)$ for
every $A \in \Sigma$. Then it is possible to define the isometry
$T_r\dopu X_{[r]} \to X_{[r]}$, $0 \le r \le p$, by $T_r(f)= f \circ
\Phi$.

Define the bilinear map $B\dopu X \times X_{[p/p']} \to X_{[p]}$ given
by $B(f,g)= T_1(f) \cdot T_{p/p'}(g)$. Let us show the relation
between the slices defined by $B_0\dopu X \times X_{[p/p']} \to
X_{[p]}$, $B_0(f,g)=fg$, and the slices defined by $B$. Assume
also that $X$ is order continuous. Then $X_{[p]}$ is also order
continuous and the dual of the space can be identified with the
K\"othe dual, which is also r.i., and so every continuous linear
form is an integral. Note that in this case the property $S_B \le
S_{B_0}$ holds, since for every couple of functions $f \in X$ and
$g \in X_{[p/p']}$, $B(f,g)= T_1(f) \cdot T_{p/p'}(g)= (f \circ
\Phi) \cdot (g \circ \Phi)= (f \cdot g) \circ \Phi$. Consequently,
every element $z' \in S_{(X_{[p]})'}$ satisfies that for every
pair of functions $f$ and $g$ as above,
\bea
\langle B_0(f,g), z' \rangle &=& \int_\Omega fg z' \, d\mu= 
\int_\Omega ((fg) \circ \Phi) \cdot  (z' \circ \Phi) \, d\mu\\
&=& \int_\Omega B(f,g) \cdot (z' \circ \Phi) \, d\mu= \langle B(f,g), z'
\circ \Phi \rangle.
\eea
Therefore, there is a one-to-one correspondence between $S_{B_0}$
and $S_B$ given by identifying $S((B_0)_{z'}, \varepsilon)$ and
$S(B_{z' \circ \Phi}, \varepsilon)$, which implies that
$S_{B_0}=S_B$.
\end{example}

Fix a norming bilinear map $B_0\dopu X \times Y \to Z$ and consider a
norm one bilinear map $B\dopu X \times Y \to Z$. Let us provide now
geometric and topological properties for $B$ that imply that the
Daugavet equation is satisfied for $B_0$ and $B$, i.e.,
$\|B_0+B\|=2$. These properties  will be proved as applications of
the result of the previous section.

\begin{corollary} \label{weakcom}
Let $Z$ be a Banach space with the Daugavet property. Let $B_0\dopu X
\times Y \to Z$ be a norming bilinear map and $B\dopu X \times Y
\to Z$ a continuous bilinear map. Then: 

\begin{itemize}
\item[(1)]
 If for every $\varepsilon >0$ there is a
slice $S_0 \in S_{B_0}$ and an element $z \in S_Z$ such that
$B(S_0) \subset B_\varepsilon(z)$, then
$$
\| B_0 + B \|= 1 + \|B\|.
$$
\item[(2)]
If $S_{T_B} \le S_{B_0}$ and $T_B$ is weakly compact
(equivalently, $B(B_X \times B_Y)$ is a relatively weakly compact set), then
$$
\|B_0 + B\|=1 + \|B\|.
$$

\end{itemize}
\end{corollary}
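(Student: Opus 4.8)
The plan is to linearise and reduce both parts to the corresponding parts of Theorem~\ref{weakcom0}, applied to $T := T_{B_0}$ and $R := T_B$ on $X \hat{\otimes}_\pi Y$. The passage $B \mapsto T_B$ is a linear isometric identification of $B(X \times Y, Z)$ with $L(X \hat{\otimes}_\pi Y, Z)$, so $\|B_0 + B\| = \|T_{B_0} + T_B\|$ and $\|B\| = \|T_B\|$; in particular the bilinear Daugavet equation $\|B_0 + B\| = 1 + \|B\|$ is exactly the linear one for the linearisations. As in the linear case I may assume $\|B\| = \|T_B\| = 1$, so that it suffices to prove $\|T_{B_0} + T_B\| = 2$.

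Next I would check that $T_{B_0}$ meets the structural hypotheses of Theorem~\ref{weakcom0}. Since $B_0$ is norming, every $z \in U_Z$ equals $B_0(x,y) = T_{B_0}(x \otimes y)$ for some $(x,y) \in U_X \times U_Y$, and then $\pi(x \otimes y) = \|x\|\,\|y\| < 1$; together with $\|T_{B_0}\| = \|B_0\| = 1$ this gives $T_{B_0}(U_{X \hat{\otimes}_\pi Y}) = U_Z$. Hence $T_{B_0}$ is a quotient map, so that $T_{B_0}'$ is an isometry onto its range, as required. I would also record that the slices match up: for $z' \in S_{Z'}$ one has $\langle x \otimes y, T_{B_0}'(z') \rangle = \langle B_0(x,y), z' \rangle$ and $\|\langle B_0, z' \rangle\| = \|T_{B_0}'(z')\|$, so the linearisation of the normalised form $(B_0)_{z'}$ is precisely $(T_{B_0})_{z'}$ and each $S((T_{B_0})_{z'}, \eta)$ is a genuine member of $S_{T_{B_0}}$.

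For part~(1), fix $\eps > 0$ and take the slice $S_0 = S((B_0)_{z'}, \delta) \in S_{B_0}$ and the element $z \in S_Z$ supplied by the hypothesis, with $B(S_0) \subset B_\eps(z)$ and, after shrinking, $\delta \le \eps$. Passing to the linearised slice $S((T_{B_0})_{z'}, \delta^2) \in S_{T_{B_0}}$, Lemma~\ref{lemacompa}(3) gives
$$
S((T_{B_0})_{z'}, \delta^2) \subset \overline{\co \{x \otimes y\dopu (x,y) \in S_0\}} + 4\delta\, B_{X \hat{\otimes}_\pi Y}.
$$
Applying the linear, norm-one map $T_B$, using $T_B(x \otimes y) = B(x,y) \in B_\eps(z)$ for $(x,y) \in S_0$ together with the convexity and closedness of a ball, I obtain $T_B(S((T_{B_0})_{z'}, \delta^2)) \subset B_{\eps+4\delta}(z) \subset B_{5\eps}(z)$. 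Thus $T_B$ sends a slice from $S_{T_{B_0}}$ into a small ball about the norm-one element $z$, which is precisely the hypothesis of Theorem~\ref{weakcom0}(1); that theorem yields $\|T_{B_0} + T_B\| = 2$.

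For part~(2), $T_B$ is weakly compact by assumption (equivalently $B(B_X \times B_Y)$ is relatively weakly compact), so the only remaining task is to supply an adequate comparison of slice sets. The exact relation $S_{T_B} \le S_{T_{B_0}}$ is \emph{not} what the hypothesis provides, and here lies the main obstacle: Lemma~\ref{lemacompa}(3) captures a linearised slice only up to an additive error $4\eps$, so one cannot in general place a slice from $S_{T_{B_0}}$ exactly inside a slice from $S_{T_B}$. The remedy is the weakened comparison sanctioned by Remark~\ref{themainrem}: it suffices that for every $S \in S_{T_B}$ and every $\delta > 0$ there be some $S_1 \in S_{T_{B_0}}$ with $S_1 \subset S + \delta B_{X \hat{\otimes}_\pi Y}$. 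To produce $S_1$, I start from $S \in S_{T_B}$; the hypothesis $S_{T_B} \le S_{B_0}$ yields $S_2 = S((B_0)_{z'}, \eps) \in S_{B_0}$ with $\{x \otimes y\dopu (x,y) \in S_2\} \subset S$, whence, $S$ being closed and convex, also $\overline{\co \{x \otimes y\dopu (x,y) \in S_2\}} \subset S$. Lemma~\ref{lemacompa}(3) then gives $S((T_{B_0})_{z'}, \eps^2) \subset S + 4\eps\, B_{X \hat{\otimes}_\pi Y}$, so choosing $\eps$ with $4\eps \le \delta$ delivers the required $S_1$. With the weakened slice condition in hand, Theorem~\ref{weakcom0}(2) (in the form of Remark~\ref{themainrem}) yields $\|T_{B_0} + T_B\| = 2$, which completes the proof.
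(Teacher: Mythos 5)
Your proof is correct and takes essentially the same route as the paper's: linearise to $T_{B_0}$ and $T_B$, use Lemma~\ref{lemacompa}(3) to transfer slices between $S_{B_0}$ and $S_{T_{B_0}}$, and conclude via Theorem~\ref{weakcom0}(1) for part (1) and via Theorem~\ref{weakcom0}(2) combined with the weakened slice comparison of Remark~\ref{themainrem} for part (2). In fact you spell out two points the paper leaves implicit --- that $B_0$ being norming makes $T_{B_0}$ a quotient map, so the structural hypothesis of Theorem~\ref{weakcom0} is met, and the explicit derivation of the relation $S_1 \subset S + \delta B_{X \hat{\otimes}_\pi Y}$ from $S_{T_B} \le S_{B_0}$ --- so your write-up is, if anything, more complete.
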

\begin{proof}
(1) is just a consequence of Theorem~\ref{weakcom0}(1): let us
take $\varepsilon/5$ and apply this theorem to $T_{B_0}$ and
$T_B$. By hypothesis there is a slice $S_0=S(b,\delta) \in
S_{B_0}$ such that $B(S_0) \subset B_{\varepsilon/5}(z)$.  We
can assume without loss of generality that $\delta \le
\varepsilon$ and $\|B\| \le 1$. Then by Lemma~\ref{lemacompa}(3),
\bea
T_{B}(S(T_b,\delta^2)) &\subset& \textstyle
T_{B}(\overline{\co(S_0)})+ 
\frac45 \varepsilon T_{B}(B_{X \hat{\otimes}_\pi Y}) \\
&\subset& \textstyle
\overline{\co(B(S_0))} + \frac45  \varepsilon \|T_{B}\|B_Z
\subset B_{\varepsilon/5}(z) + \frac45  \varepsilon B_Z  \\
&\subset&
B_\varepsilon (z),
\eea
and (1) is proved. For (2), apply Theorem~\ref{weakcom0}(2) and
Remark~\ref{themainrem}.

\end{proof}

\begin{example}
It is well known that for a purely non-atomic measure $\mu$ and a
Banach space $E$ the space of Bochner integrable functions
$L^1(\mu,E)$ has the Daugavet property (see \cite{ams2000}). The
next simple application of Corollary~\ref{weakcom} provides a
similar result for the Pettis norm $\|\,.\,\|_P$, i.e., for
operators $T$ from $(L^1(\mu,E),\|\,.\,\|_{L^1(\mu,E)})$ to the
normed space $(L^1(\mu,E),\|\,.\,\|_P)$. Consider the bilinear
map $B_0\dopu L^1(\mu,E) \times E' \to L^1(\mu)$ given by
$$
B_0(f,x')(w)= \langle f(w), x' \rangle, \qquad w \in \Omega.
$$
Take an operator $T\dopu L^1(\mu,E) \to L^1(\mu,E)$ and define the
bilinear map $B_T\dopu L^1(\mu,E) \times E' \to L^1(\mu)$ given by
$$
B_T(f,x')(w)= \langle (T(f))(w), x' \rangle, \qquad w \in \Omega.
$$
Assume that $B_T$ is weakly compact and has convex range and
suppose that $S_{B_T} \le S_{B_0}$ (or that $T_{B_T}$ is weakly
compact and $S_{T_{B_T}} \le S_{T_{B_0}}$). Then by
Corollary~\ref{weakcom}(3) (or~(4)), 
\bea
\sup_{f \in B_{L^1(\mu,E)}} \|f + T(f)\|_P
&=& \sup_{f \in
B_{L^1(\mu,E)}, \ x' \in B_{E'}}
\|B_0(f,x')+B_T(f,x')\|_{L^1(\mu)}\\
&=& \sup_{f \in B_{L^1(\mu,E)}, \ x' \in B_{E'}} \|B_0(f,x')\| \\
&& \mbox{} \qquad {} +
\sup_{f \in B_{L^1(\mu,E)}, \ x' \in B_{E'}}
\|B_T(f,x')\|_{L^1(\mu)} \\
 &=& \sup_{f \in B_{L^1(\mu,E)}} \|f\|_P +  \sup_{f \in
   B_{L^1(\mu,E)}} \|T(f)\|_P .
\eea

\end{example}

Corollary~\ref{weakcom} suggests that the natural examples of
bilinear maps that satisfy the Daugavet equation with respect to
$B_0$ are the ones defined as $B=T \circ B_0$, where $T\dopu Z \to Z$
is a weakly compact operator. Corollary~\ref{coroweakcom}
generalises in a sense the idea of (2) and (3) in
Example~\ref{primex}. Notice, however, that there are other simple 
bilinear maps that fit into the Daugavet setting, as the following
example shows.

\begin{example} \label{nuevo}
Let us show an example of a bilinear map $B\dopu X \times Y \to Z$ such
that $B_0$ and $B$ satisfy the Daugavet equation but there is no
operator $T\dopu Z \to Z$ such that $B=T \circ B_0$. Let $(\Omega,
\Sigma, \mu)$, $X(\mu)$ and $\Phi$ be as in Example~\ref{exslices}
and consider the isometry $T_1\dopu X \to X$ defined there. Assume also
that $\mu(\Omega) < \infty $ and the constant~$1$ function
satisfies $\|\chi_\Omega\|_X=1$. Consider the bilinear map $B\dopu X
\times X_{[p/p']} \to X_{[p]}$ given by $B(f,g)=T _1(f) \cdot g$.
Then, since $T_1(\chi_\Omega)= \chi_\Omega$,
\bea
2 &\ge& \|B_0+B\| =  \sup_{f \in B_X, \ g \in B_{X_{[p/p']}}} \|fg +
T_1(f)g \|_{X_{[p]}} \\
&=&\sup_{f \in B_X, \ g \in B_{X_{[p/p']}}} \|(f + T_1(f))g
\|_{X_{[p]}} =\sup_{f \in B_X} \|f+T_1(f)\|_X \\
 &\ge& \| \chi_\Omega
+ \chi_\Omega\|=2.
\eea 
Notice that in general a bilinear map defined in this way cannot
be written as $T \circ B_0$ for any operator $T$. For instance,
suppose that there is a set $B \in \Sigma$ such that $0 < \mu(B)$
and $B \cap \Phi(B) = \emptyset$ and consider a couple of
non-trivial functions $f_1$ and $f_2$ in $X$ with support in
$\Phi(B)$ and $B$, respectively, and such that $\|(f_1\circ \Phi)
\cdot f_2\| >0$. Then $B_0(f_1,f_2)=0$, but $B(f_1,f_2)\ne 0$, so
there is no operator $T\dopu X_{[p]} \to X_{[p]}$ such that $B= T \circ
B_0$.
\end{example}

\begin{corollary} \label{coroweakcom}
Let $B_0\dopu  X \times Y \to Z$ be a norming bilinear map. Consider
the subsets 
$R$, $C$ and $WC$ of $L(Z,Z)$ of rank one, compact and weakly
compact operators, respectively, and the sets $R \circ B_0=\{B=T
\circ B_0\dopu X \times Y \to Z\dopu   T \in R\}$, $C \circ B_0=\{B=T
\circ B_0\dopu X \times Y \to Z\dopu   T \in C\}$ and $ WC \circ B_0=\{B=T
\circ B_0\dopu X \times Y \to Z\dopu   T \in WC\}$. Then the following are
equivalent.
\begin{itemize}
\item[(1)]$Z$ has the Daugavet property.

\item[(2)] $Z$ has the $B_0$-Daugavet property with respect to $R
\circ B_0$.

\item[(3)] $Z$ has the $B_0$-Daugavet property with respect to $C
\circ B_0$.

\item[(4)] $Z$ has the $B_0$-Daugavet property with respect to $WC
\circ B_0$.

\item[(5)] For every norm one operator $T \in R$, every $z \in Z$
and every  $\varepsilon >0$  there is an element $(x,y) \in S(T
\circ B_0, \varepsilon)$ such that
$$
\|z + B_0(x,y)\| \ge 2- \varepsilon.
$$

\end{itemize}

\end{corollary}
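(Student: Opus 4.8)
The plan is to reduce every statement to a classical fact about $Z$ by exploiting that composing with the norming map $B_0$ preserves norms. Since $B_0(U_X\times U_Y)=U_Z$, the set $B_0(B_X\times B_Y)$ is squeezed between $U_Z$ and $B_Z$, so for any $T\in L(Z,Z)$ the suprema of $\|(\Id+T)(\cdot)\|$ over these three sets coincide by continuity; hence
$$
\|T\circ B_0\|=\|T\| \quad\text{and}\quad \|B_0+T\circ B_0\|=\|\Id+T\|.
$$
Consequently, for each class $\mathcal A$ of operators on $Z$, the assertion that ``$Z$ has the $B_0$-Daugavet property with respect to $\mathcal A\circ B_0$'' is equivalent to ``$\|\Id+T\|=1+\|T\|$ for every $T\in\mathcal A$''.

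With this reduction, $(2)$, $(3)$ and $(4)$ say, respectively, that the Daugavet equation holds for every rank one, every compact, and every weakly compact operator on $Z$, and the equivalence of all three with the Daugavet property $(1)$ is exactly the classical characterisation \cite[Th.~2.3]{ams2000}; this settles $(1)\Leftrightarrow(2)\Leftrightarrow(3)\Leftrightarrow(4)$ in one stroke. Alternatively, I could stay inside the present framework: $(1)\Rightarrow(2)$ is the definition via the reduction, the implications $(4)\Rightarrow(3)\Rightarrow(2)$ follow from the inclusions $R\subset C\subset WC$ (giving $R\circ B_0\subset C\circ B_0\subset WC\circ B_0$), and for $(1)\Rightarrow(4)$ I would note that $B=T\circ B_0$ is a composition, so $S_B\le S_{B_0}$, which lifts through Lemma~\ref{lemacompa}(2) to $S_{T_B}\le S_{B_0}$ with $T_B$ weakly compact whenever $T$ is, whence Corollary~\ref{weakcom}(2) applies.

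It remains to prove $(1)\Leftrightarrow(5)$, for which I would use the geometric picture. A norm one operator $T\in R$ has the form $T=z'\otimes z_0$ with $z'\in S_{Z'}$ and $z_0\in S_Z$, and by the norming property the form $(x,y)\mapsto\langle B_0(x,y),z'\rangle$ has norm $1$, so
$$
S(T\circ B_0,\varepsilon)=\{(x,y)\in B_X\times B_Y\dopu \langle B_0(x,y),z'\rangle\ge 1-\varepsilon\}.
$$
Writing $w=B_0(x,y)$, the points of this slice correspond to those $w\in B_0(B_X\times B_Y)$ lying in the ordinary slice $S(z',\varepsilon)$ of $B_Z$. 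Reading $(5)$ with $z\in S_Z$, it becomes: for all $z\in S_Z$, $z'\in S_{Z'}$ and $\varepsilon>0$ there is $w\in S(z',\varepsilon)$ with $\|z+w\|\ge 2-\varepsilon$, which is precisely the classical geometric description of the Daugavet property (Lemma~2.1 in \cite{ams2000}).

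I expect the main obstacle to be the bookkeeping in $(1)\Leftrightarrow(5)$: one must verify that the points $w=B_0(x,y)$ produced by the bilinear slice genuinely exhaust the classical slice $S(z',\varepsilon)$. Because $B_0$ is only \emph{norming}, its image of the open product ball is the \emph{open} ball $U_Z$, so interior points of $S(z',\varepsilon)$ lift to preimages in $U_X\times U_Y$, whereas points on the sphere $S_Z$ must be caught by an approximation argument, with the resulting error absorbed into $\varepsilon$. The only other point needing care is the normalisation of $(5)$ to unit vectors $z$ (for $\|z\|<1$ the inequality cannot be met, and the substantive case $z\in S_Z$ is reached exactly as in the proof of Proposition~\ref{geom0}).
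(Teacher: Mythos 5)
Your proposal is correct, but it takes a partially different route from the paper's proof. The opening reduction --- $\|T\circ B_0\|=\|T\|$ and $\|B_0+T\circ B_0\|=\|\Id+T\|$, both consequences of $B_0(U_X\times U_Y)=U_Z$ --- is exactly how the paper establishes $(1)\Leftrightarrow(2)$. From there the paths diverge: the paper stays inside its own bilinear machinery, deducing $(2)\Rightarrow(4)$ from Corollary~\ref{weakcom}(2) (using that $B=T\circ B_0$ is a composition, so $S_{T_B}\le S_{B_0}$, and that $T_B$ is weakly compact since $\overline{\co}\,B(B_X\times B_Y)$ is), with $(4)\Rightarrow(3)\Rightarrow(2)$ trivial from the inclusions, and it obtains $(2)\Leftrightarrow(5)$ by invoking Corollary~\ref{geom20}. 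You instead apply the norm identity uniformly to all three classes and quote the classical theorem \cite[Th.~2.3]{ams2000} once, which settles $(1)\Leftrightarrow(2)\Leftrightarrow(3)\Leftrightarrow(4)$ without any of the Section~3 apparatus; the ``alternative'' you sketch via Corollary~\ref{weakcom}(2) is in fact the paper's actual argument. Your treatment of $(5)$ is likewise independent of Corollary~\ref{geom20}: you translate bilinear slices of $T\circ B_0$ into ordinary slices of $B_Z$ intersected with $B_0(B_X\times B_Y)$ and cite the classical geometric characterisation directly. What your route buys is brevity and self-containedness relative to the classical literature; what the paper's route buys is a demonstration that its new slice-continuity framework for bilinear maps suffices on its own. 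A further merit of your write-up is that you make explicit two points the paper glosses over: that $(5)$ must be read with $z\in S_Z$ (as stated, with arbitrary $z\in Z$, the inequality is vacuous for small $\|z\|$), and that since $B_0$ is only guaranteed to cover $U_Z$, a point of a classical slice lying on the unit sphere must be scaled by $1-\delta$ before it can be lifted through $B_0$, with the loss absorbed into $\varepsilon$ --- precisely the bookkeeping needed to make the paper's terse appeal to Corollary~\ref{geom20} rigorous.
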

\begin{proof}
The equivalence between (1) and (2) is a direct consequence of the
following equalities. For every rank one operator $T\dopu Z \to Z$,
$$
\|\Id+T\|= \sup_{z \in B_Z} \|z+T(z)\| = \sup_{x \in B_X,y \in B_Y}
\|B_0(x,y)+T(B_0(x,y))\|.
$$
Since the norm closure of the convex hull  $B(B_X \times B_Y)$ is a weakly
compact set, (2) implies~(4) as a consequence of
Corollary~\ref{weakcom}(2). Obviously (4) implies~(2), and so the 
equivalence of (2) and (3) is also clear. The equivalence of (2)
and (5) holds as a direct consequence of Corollary~\ref{geom20} and
the arguments used above. 
\end{proof}

\begin{remark}
Conditions under which a bilinear map $B\dopu X \times Y \to Z$ is
compact or weakly compact (i.e., the norm closure $\overline{B(B_X
\times B_Y)}$ is compact or weakly compact, respectively) have
been studied in several papers; see \cite{ramsch,Ruch} for
compactness and \cite{arongalindo,ulger} for weak compactness. The
reader can find in these papers some factorisation theorems and
other characterisations of these properties, also related with the
notion of Arens regularity of a bilinear map.
\end{remark}

\section{Applications. $p$-convexifications of the Daugavet property and bilinear maps}

Different $p$-convexifications of the Daugavet property have been introduced
in \cite{EnrDir2,EnrDir3}. In this section we show that in a sense
they can be considered as particular cases of a Daugavet property
for bilinear maps. We centre our attention on the case of Banach
function spaces such that their $p$-th powers have the Daugavet
property that have been characterised in \cite{EnrDir2}. However,
more examples of applications will be given as well. Throughout
this section $\mu$ is supposed to be finite.

We explain now two suitable examples of $p$-convexification of the
Daugavet property. Let us start with one regarding $p$-concavity
in Banach function spaces.

\begin{example}
Let $1 \le p < \infty$. Consider a constant~$1$ $p$-convex Banach
function space $X$, $Y=X_{[p/p']} \oplus_\infty X_{[p/p']}$ (the
direct product with the maximum norm), $Z=X_{[p]}$, and the
bilinear map $B_0\dopu  X \times ( X_{[p/p']} \times_\infty X_{[p/p']})
\to X_{[p]}$ given by  $B_0(f,(g,h))= f \cdot P_1(g,h)= fg$.
 Take an operator $T\dopu X_{} \to X_{}$ and consider the
bilinear map $B\dopu  X \times ( X_{[p/p']} \oplus_\infty X_{[p/p']})
\to X_{[p]}$ given by $B(f,(g,h))= T(f) \cdot P_2(g,h)= fh$ (here
$P_1$ and $P_2$ denote the two natural projections in the product
space $X_{[p/p']} \oplus_\infty X_{[p/p']}$). A direct calculation
shows that in this case the Daugavet equation for the pair given
by $B_0$ and $B$ is
$$
\|B_0 + B \| = 1+ \|T\|,
$$
since $\|B\|=\|T\|$. Assume that $\|T\|=1$. Then $\|T(f)\| \le 1$
for every $f \in B_X$, and so, taking $g= f^{p/p'} \in
B_{X_{[p/p']}}$ and $h=T(f)^{p/p'} \in B_{X_{[p/p']}}$ for each $f
\in B_X$, we obtain
\bea
2 \ge \|B_0 + B \| &=& \sup_{f \in B_X, \ g \in B_{X_{[p/p']}},\ h \in
B_{X_{[p/p']}}} \| fg + T(f)\cdot h\|_{X_{[p]}}\\
&\ge& \sup_{f \in B_X} \| |f|^p + |T(f)|^p \|_{X_{[p]}} \\
&\ge& \sup_{f \in B_X} \| (|f|^p + |T(f)|^p)^{1/p}\|^p_{X} .
\eea
Thus, if $X$ is also a constant~$1$ $p$-concave space (i.e., $X$ is
an $L^p$-space) we get
$$
\sup_{f \in B_X} \| (|f|^p + |T(f)|^p)^{1/p}\|^p_{X}
 \ge \sup_{f \in B_X} ( \|f\|_X^p + \|T(f)\|_X^p)= 2.
$$
Therefore, in this case the Daugavet equation holds for $B_0$ and
for every  bilinear map $B$ defined by an operator $T\dopu X_{} \to
X_{}$ in the way explained above.
\end{example}

The following construction shows another example of a Daugavet type
property for a bilinear map that is in fact a $p$-convex version
of the Daugavet property, in the sense that is studied in
\cite{EnrDir3}.

\begin{example} \label{exDaueq?}
Let $(\Omega, \Sigma, \mu)$ be a measure space and consider an
r.i.\  constant~$1$ $p$-convex Banach function space $X(\mu)$.
Consider as in Example~\ref{exslices} the bilinear map $B_0$ given
by the product and a measurable bijection $\Phi\dopu \Omega \to \Omega$
satisfying  that $\mu( \Phi(A))=\mu(A)$ for every $A \in
\Sigma$ and the isometries $T_r\dopu X_{[r]} \to X_{[r]}$, $0 < r \le
p$.

Take the bilinear map $B\dopu X \times X_{[p/p']} \to X_{[p]}$ given by
$B(f,g)= T_1(f) \cdot T_{p/p'}(g)$. Notice that $\|B\|=1$. Then
\bea
2 \ge \|B_0+B\| &\ge& \sup_{f \in B_X, \ g \in B_{X_{[p/p']}}}
\|B_0(f,g)+ B(f,g)\|_{X_{[p]}} \\
&\ge& \sup_{f \in B_X} \| f \cdot f^{p/p'} + T_1(f) \cdot
T_{p/p'}(f^{p/p'}) \|_{X_{[p]}} \\
&=& \sup_{x \in B_X} \|f^p + T_1(f)^p\|_{X_{[p]}} \\
&=& \sup_{f \in B_X} \| |f^p+ T_1(f)^p|^{1/p} \|_X^p   .
\eea
Now, if $\Phi$ satisfies that there is a set $A \in \Sigma$ such
that $\mu(A \cap \Phi(A)) < \mu(A)$, there is a norm one function
$f_0$ such that $f_0$ and $T_1(f_0)$ are disjoint and
$\|T_1(f_0)\|=1$. Assume that $X$ is also $p$-concave (constant~$1$),
i.e., $X$ is an $L^p$-space. Then 
$$
\sup_{f \in B_X} \| |f^p+ T_1(f)^p|^{1/p} \|_X^p 
 \ge \|f_0\|_X^p + \|T_1(f_0)\|_X^p=2,
$$
and thus the so called $p$-Daugavet equation is satisfied for
$T_1$ (see Definition~1.1 in \cite{EnrDir3}), and $B$ and $B_0$
satisfy the Daugavet equation.
\end{example}

Let $1 \le p < \infty$. In what follows we study the $p$-convex
spaces whose $p$-th powers satisfy the Daugavet property by giving
some general results in the setting of the examples presented
above. We analyse the case of $X=X(\mu)$, a constant~$1$
$p$-convex Banach function space, $Y= X(\mu)_{[p/p']}$,
$Z=X(\mu)_{[p]}$, and $B_0\dopu X \times X_{[p/p']} \to X_{[p]}$ given
by $B_0(f,g)=f \cdot g$. We assume that $X_{[p]}$ has the Daugavet
property. The main example we have in mind is given by
$X=L^p[0,1]$, $Y=X_{[p/p']}=L^{p'}[0,1]$ and $Z=X_{[p]}=
L^1[0,1]$. Recall that $\mu$ is assumed to be finite.

\begin{definition}
Let $X(\mu)$, $Y(\mu)$ and $Z(\mu)$ three Banach function spaces
over $\mu$. We say that a continuous bilinear map $B\dopu X(\mu) \times
Y(\mu) \to Z(\mu)$ satisfying that for every $A,C \in \Sigma$,
$B(\chi_A,\chi_C)= B(\chi_{A \cap C},
\chi_{A \cup C})$, is a symmetric bilinear map.
\end{definition}

\begin{proposition} \label{pthDAugavet}
 Let $X(\mu)$ be an order continuous $p$-convex Banach function space 
with $p$-convexity constant equal to~$1$. Then the following assertions are
 equivalent.
\begin{itemize}
\item[(1)]  For every rank one operator $T\dopu X(\mu)_{[p]} \to
X(\mu)_{[p]}$,
 $$
 \sup_{f \in B_X} \||f^p+ T(f^p) |^{1/p} \|^p_X=1 + \|T\|.
 $$

\item[(2)]  For every rank one operator $T\dopu X(\mu)_{[p]} \to
X(\mu)_{[p]}$,
$$
\| B_0 + T \circ B_0\| = 1 + \|T\|.
$$
\item[(3)] For every $z \in S_{X_{[p]}}$, for every  $x' \in
S_{(X_{[p]})'}$ and for every $\varepsilon >0$ there is an
element $(f,g) \in S((B_0)_{x'},\varepsilon)$ such that
$$
\| z + B_0(f,g)\|_{X_{[p]}} \ge 2- 2 \varepsilon.
$$

\item[(4)] Each  weakly compact symmetric bilinear map
$B\dopu X(\mu) \times X_{[p/p']} \to X_{[p]}$ satisfies the equation
$$
\| B_0+B\|=1 + \|B\|.
$$

\item[(5)] $X_{[p]}$ has the Daugavet property.

\end{itemize}
\end{proposition}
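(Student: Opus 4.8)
The plan is to prove Proposition~\ref{pthDAugavet} as a cycle of implications establishing the equivalence of all five conditions. First I would settle the equivalence of (1) and (2) by a direct computation, unwinding the definitions. Using Remark~\ref{rembasic}, every $h \in X_{[p]}$ factors as $h = |h|^{1/p} h^{1/p'}$ with $h^{1/p} \in X$ and $h^{1/p'} \in X_{[p/p']}$, and by the norming property of $B_0$ the supremum over $B_X \times B_{X_{[p/p']}}$ of $\|B_0(f,g) + T(B_0(f,g))\|_{X_{[p]}}$ equals $\sup_{h \in B_{X_{[p]}}} \|h + T(h)\|_{X_{[p]}} = \|\Id + T\|$. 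Writing $h = f^p$ for $f \in B_X$ and using $\|f^p\|_{X_{[p]}} = \|f\|_X^p$ together with the identity $\||f^p + T(f^p)|^{1/p}\|_X^p = \|f^p + T(f^p)\|_{X_{[p]}}$ converts (1) into (2).

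Next I would treat (2), (3), (4), and (5) as instances of the general machinery of Sections~2 and~3. Since $B_0$ is a norming bilinear map, $B_0(U_X \times U_{X_{[p/p']}}) = U_{X_{[p]}}$, and so $Z = X_{[p]}$ has the $B_0$-Daugavet property with respect to a class of bilinear maps precisely when it has the $T_{B_0}$-Daugavet property with respect to the linearised class. The equivalence of (5) with (2) is then exactly the content of Corollary~\ref{coroweakcom} applied to $B_0$: condition (2) of the present proposition is the $B_0$-Daugavet property with respect to $R \circ B_0$, which is item~(2) of that corollary, while (5) is item~(1). For the equivalence of (3) with (2) and (5), I would invoke the bilinear analogue of Corollary~\ref{geom20}: condition (3) is the geometric slice reformulation of the $B_0$-Daugavet property for rank one maps, exactly as item~(5) of Corollary~\ref{coroweakcom} records, so (3) is equivalent to the Daugavet property of $X_{[p]}$.

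The implication $(5)\Rightarrow(4)$ is where the symmetric hypothesis and weak compactness enter, and this is the main obstacle. Given a weakly compact symmetric bilinear map $B$, I want to apply Corollary~\ref{weakcom}(2), which requires the slice continuity condition $S_{T_B} \le S_{B_0}$ in addition to weak compactness of $T_B$. Weak compactness of $T_B$ is equivalent to $B(B_X \times B_{X_{[p/p']}})$ being relatively weakly compact, which is the hypothesis. The crux is to verify that symmetry of $B$ forces the slice comparison $S_{T_B} \le S_{B_0}$. Here the symmetry relation $B(\chi_A,\chi_C) = B(\chi_{A\cap C}, \chi_{A\cup C})$ should be used, together with order continuity of $X$ and the description of $(X_{[p]})'$ as a Köthe dual consisting of integrals, to show that each adjoint form $B_{z'}$ is controlled by a form $(B_0)_{w'}$ in the sense that every slice of $B$ contains a slice of $B_0$; the mechanism is analogous to the computation in Example~\ref{exslices}, where a measure-preserving rearrangement produced the identity $S_{B_0} = S_B$. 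Once $S_{T_B} \le S_{B_0}$ is established, Corollary~\ref{weakcom}(2) yields $\|B_0 + B\| = 1 + \|B\|$, giving (4). Finally, $(4)\Rightarrow(2)$ is immediate, since for rank one $T$ the composition $T \circ B_0$ is a weakly compact symmetric bilinear map (rank one operators are weakly compact, and $T \circ B_0$ inherits symmetry from $B_0$, whose symmetry follows from $B_0(\chi_A,\chi_C) = \chi_A\chi_C = \chi_{A\cap C} = \chi_{A\cap C}\chi_{A\cup C} = B_0(\chi_{A\cap C},\chi_{A\cup C})$). This closes the cycle $(1)\Leftrightarrow(2)\Leftrightarrow(3)\Leftrightarrow(5)$ and $(5)\Rightarrow(4)\Rightarrow(2)$, establishing the full equivalence.
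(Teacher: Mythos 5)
Your treatment of (1)$\Leftrightarrow$(2), (2)$\Leftrightarrow$(3) and (2)$\Leftrightarrow$(5) is sound and essentially the paper's route: the paper obtains (1)$\Leftrightarrow$(5) directly via the bijection $i_{[p]}(f)=f^p$ rather than through Corollary~\ref{coroweakcom}, but your version works, since constant~$1$ $p$-convexity together with Remark~\ref{rembasic} does make $B_0$ norming. The implication (4)$\Rightarrow$(2) is also fine: for rank one $T$ the map $T\circ B_0$ is weakly compact and inherits symmetry, as you verify.

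The genuine gap is in (5)$\Rightarrow$(4). You reduce everything to the claim that symmetry of $B$ forces the slice comparison $S_{T_B}\le S_{B_0}$, but you never prove it; you only say the symmetry relation ``should be used'' in a way ``analogous to Example~\ref{exslices}''. That analogy hides the actual difficulty: in Example~\ref{exslices} the slice identification comes from the explicit identity $B(f,g)=(fg)\circ\Phi$, i.e., from the fact that $B$ visibly factors through the pointwise product $B_0$. For an abstract symmetric $B$ you have no candidate functional $w'$ for which $S((B_0)_{w'},\delta)\subset S(B_{z'},\varepsilon)$: the symmetry hypothesis $B(\chi_A,\chi_C)=B(\chi_{A\cap C},\chi_{A\cup C})$ is a statement about characteristic functions only, and to turn it into a statement about adjoint functionals you must first show that $B(f,g)$ depends on $(f,g)$ only through the product $fg$. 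That is exactly the Claim on which the paper's proof rests: for $X$ order continuous (so that simple functions are dense in $X$ and in $X_{[p/p']}$), a continuous bilinear map $B\dopu X\times X_{[p/p']}\to X_{[p]}$ is symmetric if and only if $B=T\circ B_0$ for some continuous operator $T\dopu X_{[p]}\to X_{[p]}$; one defines $T(h)=B(f,g)$ for $h=fg$ with $f,g$ simple, checks well-definedness using symmetry, and extends by density and Remark~\ref{rembasic}. Once the factorization is in hand, $\langle B(f,g),z'\rangle=\langle B_0(f,g),T'(z')\rangle$, so the slice comparison you wanted is automatic --- but at that point you no longer need Corollary~\ref{weakcom}(2) at all: since $T$ is weakly compact exactly when $B$ is, condition (4) becomes the $B_0$-Daugavet property with respect to $WC\circ B_0$, and Corollary~\ref{coroweakcom} gives (2)$\Leftrightarrow$(4) directly. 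In short, your outline is recoverable, but the step you wave at (symmetry $\Rightarrow$ slice continuity) is the real content of the equivalence with (4) and requires the factorization argument.
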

\begin{proof}
For the equivalence of (1) and (2), note that the constant~$1$
$p$-convexity of $X$ implies that $B_X \cdot B_{X_{[p/p']}}=
B_{X_{[p]}}$ is the unit ball of the Banach function space
$X_{[p]}$; so, using also Remark~\ref{rembasic} the following
inequalities are obtained:
\bea
\sup_{f \in B_X} \||f^p+ T(f^p) |^{1/p} \|^p_X 
&\le&  \sup_{f \in B_X, \ g \in B_{X_{[p/p']}}} \| fg + T(fg) \|_X \\
&\le& \sup_{h \in B_{X_{[p]}}} \|h + T(h)\|_{X_{[p]}} \\
&\le& \sup_{f \in B_X} \||f^p+ T(f^p) |^{1/p} \|^p_X ,
\eea
and then both assertions are seen to be equivalent. The equivalence of (2)
and (3) is obtained by applying  Corollary~\ref{geom20} to the
setting of bilinear maps.

 Taking into account that the map
$i_{[p]}\dopu X \to X_{[p]}$ given by $i_{[p]}(f)=f^p$ is a bijection
satisfying $\|i_{[p]}(f)\|_{X_{[p]}}=\|f\|^p_X$ for every $f \in
X$, and the definition of the norm $\|\,.\,\|_{X_{[p]}}$, the
equivalence of (1) and (5) is also clear using the well-known
geometric characterisation of the Daugavet property in terms of
slices (see for instance Lemma~2.2 in \cite{ams2000}).

Thus, it only remains to prove the equivalence of (2) and~(4). Let us
show first the following \textit{\textbf{Claim:} Let $X$ be a
$p$-convex (constant~$1$) Banach function space such that the simple
functions are dense and let $B\dopu X(\mu) \times X(\mu)_{[p/p']}
\to X(\mu)_{[p]}$ be a continuous bilinear map. Then $B$ is
symmetric if and only if there is an operator $T\dopu  X_{[p]} \to
X_{[p]}$ such that $B=T \circ B_0$.}

In order to prove this, note that by hypothesis the set $S(\mu)$ of
simple functions is dense in $X(\mu)$ and so for every $0 \le r \le p$ 
it is 
also dense in $X(\mu)_{[r]}$; this can be shown by a direct
computation just considering the definition of the norm in
$\|\,.\,\|_{X_{[r]}}$ and the fact that if $X$ is constant~$1$
$p$-convex then it is constant~$1$ $r$-convex for all such $r$,
see for instance \cite[Prop.~1.b.5]{lint} or
\cite[Prop.~2.54]{libro}. So this holds for $r= p/p'$. If
$B$ is symmetric, then for every couple of simple functions $f=
\sum_{i=1}^n \alpha_i \chi_{A_i}$ and $g = \sum_{j=1}^m \beta_j
\chi_{B_j}$, where $\{A_i\}_{i=1}^n$ and $\{B_i\}_{j=1}^m$ are
sequences of pairwise disjoint measurable sets,
\bea
B(f,g)&=& \sum_{i=1}^n \sum_{j=1}^m \alpha_i \beta_j
B(\chi_{A_i},\chi_{B_j}) \\
&=& \sum_{j=1}^m \sum_{i=1}^n  \alpha_i
\beta_j B(\chi_{A_i \cap B_j},\chi_{A_i \cup B_j})\\
&=& \sum_{j=1}^m \sum_{i=1}^n \beta_j \alpha_i
B(\chi_{B_j},\chi_{A_i}) = B(g,f).
\eea
Therefore, because of the continuity of $B$ and the order
continuity of the spaces, $B(f,g)=B(g,f)$ for every couple of simple
functions $f,g \in X \cap X_{[p/p']}$. Define now the map
$T\dopu X_{[p]} \to X_{[p]}$ by $T(h)=B(f,g)$ for every function
$h=fg$, first for products of simple functions and then
by density for the rest of
the elements of $X_{[p]}$ (note that the norm closure of the set
$(S(\mu) \cap B_X ) \cdot (S(\mu) \cap B_{X_{[p/p']}})$ coincides
with $B_{X_{[p]}}$). It can easily be proved that $T$ is well
defined since $B$ is symmetric. For if $f_1,g_1,f_2,g_2$ are
simple functions with $f_1 g_1=f_2 g_2$, then
$B(f_1,g_1)=B(f_2,g_2)$, and by continuity of $B$, $B(f,g)= B(g,f)$ 
for every couple $f \in X$ and $g \in X_{[p/p']}$. Further,
$T$ is continuous also by the continuity of $B$ and 
Remark~\ref{rembasic}. Consequently, $B=T \circ B_0$ and the claim is
proved.

Thus, (2) is equivalent to (4) as a consequence of
Corollary~\ref{coroweakcom}, since the operator $T$ constructed in the
Claim 
is weakly compact if and only if $B$ is weakly compact.
\end{proof}

\end{document}